\newtheorem{theorem}{Theorem}[section]
\newtheorem{lemma}[theorem]{Lemma}
\newtheorem{proposition}[theorem]{Proposition}
\newtheorem{conjecture}[theorem]{Conjecture}
\theoremstyle{definition}
\newtheorem{definition}[theorem]{Definition}
\newtheorem{remark}[theorem]{Remark}
\newtheorem{example}[theorem]{Example}
\newcommand{\N}{\mathbb N}
\newcommand{\R}{\mathbb R}
\renewcommand{\l}{\ell}
\renewcommand\arraystretch{1.1}%
\newcommand{\tabfrac}{\frac}
\newcommand{\ep}[0]{\varepsilon}
\newcommand{\ph}[0]{\varphi}
\newcommand{\la}[0]{\lambda}
\newcommand{\Th}[0]{\Theta}
\newcommand{\Si}[0]{\Sigma}
\newcommand{\x}{\bar X}
\newcommand{\ax}{\langle\x\rangle} % adjoin associating variables
\newcommand{\axy}{\langle X,Y\rangle} % adjoin two associating variables X, Y
\newcommand{\csim}{\stackrel{\mathrm{cyc}}{\thicksim}}
\DeclareMathOperator{\tr}{tr}
\DeclareMathOperator{\sym}{Sym}
\DeclareMathOperator\Span{span}
\definecolor{light-gray}{gray}{0.55}
\definecolor{white}{gray}{1}
\newcommand{\dd}[2]{\genfrac{}{}{0pt}{}{#1}{#2}}
\title[Sums of hermitian squares and the BMV conjecture]
{Sums of hermitian squares\\and the BMV conjecture}
\author{Igor Klep}
\address{Igor Klep, Univerza v Ljubljani, Oddelek za matematiko
In\v stituta za matematiko, fiziko in mehaniko,
Jadranska 19, 1111 Ljubljana, Slovénie}
\email{igor.klep@fmf.uni-lj.si}
\thanks{The first author acknowledges the financial support from the state
budget by the Slovenian Research Agency (project No. Z1-9570-0101-06).}
\author{Markus Schweighofer}
\address{Markus Schweighofer, Universit\'e de Rennes 1,
Laboratoire de Mathématiques,
Campus de Beaulieu,
35042 Rennes cedex,
France}
\thanks{Supported by the DFG grant ``Barrieren''.}
\email{markus.schweighofer@univ-rennes1.fr}
\subjclass[2000]{Primary 11E25, 13J30, 15A90; Secondary 15A45, 08B20, 90C22}
\date{July 31, 2008}
\keywords{Bessis-Moussa-Villani (BMV) conjecture, sum of hermitian squares,
trace inequality, semidefinite programming}
\begin{document}
\begin{abstract}
We show that all the coefficients of the polynomial
$$\tr((A+tB)^m)\in\R[t]$$
are nonnegative whenever $m\le 13$ is a nonnegative integer
and $A$ and $B$ are positive
semidefinite matrices of the same size. This has previously been known
only for $m\le 7$. The validity of the statement for arbitrary $m$ has
recently been shown to be equivalent to the Bessis-Moussa-Villani conjecture
from theoretical physics. In our proof, we establish a connection to sums of
hermitian squares of polynomials in noncommuting variables and to
semidefinite programming.
As a by-product we obtain an example of a real polynomial in two
noncommuting variables having nonnegative trace on all symmetric
matrices of the same size, yet not being a sum of hermitian squares
and commutators.
\end{abstract}
\maketitle

\section{Introduction}

While
attempting to simplify the calculation of partition functions
in quantum statistical mechanics, Bessis, Moussa and Villani (BMV) conjectured
in 1975 \cite{bmv} that for any hermitian $n\times n$ matrices
$A$ and $B$ with $B$ positive semidefinite, the function
$$
\ph^{A,B}:\R\to\R,\quad t\mapsto\tr\left(e^{A-tB}\right)
$$
is the Laplace transform of a positive measure $\mu^{A,B}$ on 
$\R_{\geq 0}$. That is,
$$
\ph^{A,B}(t)=\int_0^\infty e^{-tx}\,d\mu^{A,B}(x)
$$
for all $t\in\R$.
By Bernstein's theorem, this is equivalent to $\ph^{A,B}$ being
completely monotone, i.e.,
$$
(-1)^s \frac{d^s}{dt^s}\ph^{A,B}(t) \geq 0
$$
for all $s\in\N_0$ and $t\in\R_{\geq 0}$.

Due to its importance (cf. \cite{bmv,ls}) there
is an extensive literature on this conjecture. Nevertheless
it has resisted all attempts at proving it. 
For an overview of all the approaches before 1998 leading to
partial results, we refer the reader to Moussa's survey \cite{mou}.

In 2004, Lieb and Seiringer \cite{ls} achieved a breakthrough 
paving the way to a series of new attempts at proving the BMV conjecture. 
They succeeded in restating the conjecture
in the following purely algebraic form:

\begin{conjecture}[BMV, algebraic form]\label{bmv}
The polynomial \[p:=\tr((A+tB)^m)\in\R[t]\] has only nonnegative coefficients
whenever $A$ and $B$ are $n\times n$ positive semidefinite matrices.
\end{conjecture}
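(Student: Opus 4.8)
Since Conjecture~\ref{bmv} is open in general, I would first try to establish it for each fixed $m$ up to as large a value as computation permits. Expanding $(A+tB)^m$ by the noncommutative multinomial theorem, the coefficient of $t^k$ in $p$ equals $\tr(S_{m,k}(A,B))$, where $S_{m,k}(X,Y)$ denotes the sum of all $\binom mk$ words of length $m$ in the noncommuting variables $X,Y$ having exactly $k$ occurrences of $Y$; regarding $\R\langle X,Y\rangle$ as a $*$-algebra with $X^*=X$ and $Y^*=Y$, this is a symmetric element. So it suffices to show $\tr(S_{m,k}(A,B))\ge0$ for all positive semidefinite $A,B$ and all $0\le k\le m$. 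The key first step is to substitute $A=X^2$ and $B=Y^2$: as every positive semidefinite matrix is the square of a symmetric matrix of the same size, it is enough to prove $\tr(S_{m,k}(X^2,Y^2))\ge0$ for all symmetric matrices $X,Y$.

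I would then search for a sum-of-hermitian-squares certificate. Suppose $S_{m,k}(X^2,Y^2)$ is cyclically equivalent to a sum $\sum_i g_i^*g_i$ of hermitian squares in $\R\langle X,Y\rangle$, meaning the difference is a sum of commutators $pq-qp$. Evaluating at symmetric matrices $X,Y$ then gives $\tr(S_{m,k}(X^2,Y^2))=\sum_i\tr(g_i(X,Y)^*g_i(X,Y))\ge0$, because traces of commutators vanish and each $g_i(X,Y)^*g_i(X,Y)$ is positive semidefinite. For a fixed $m$ there are only finitely many relevant pairs $(m,k)$ — one may restrict to $k\le m/2$ using the symmetry $A\leftrightarrow B$, and the cases $k\in\{0,1,m-1,m\}$ are trivial — and for each, the existence of such a certificate is a semidefinite feasibility problem via the Gram matrix method: letting $W$ be the column vector of all $\binom mk$ monomials of the relevant bidegree, one seeks a positive semidefinite matrix $G$ with $W^*GW\csim S_{m,k}(X^2,Y^2)$, which amounts to a system of linear equations in the entries of $G$, one for each cyclic-equivalence class of words. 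I would solve these problems numerically and then turn each numerical solution into an exact rational certificate by projecting it onto the affine constraint subspace and verifying positive semidefiniteness in exact arithmetic.

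The main obstacle is twofold. First, nothing guarantees that these semidefinite programs are feasible: cyclic equivalence to a sum of hermitian squares is sufficient but, as will emerge, not necessary for nonnegativity of the trace, so the method is a gamble, and I would expect it to succeed through $m=13$ but to fail at $m=14$. Second, the certification has to be carried out in exact arithmetic while the Gram matrices grow rapidly — for $m=13$, $k=6$ the matrix $G$ is $1716\times1716$ — so producing numerical solutions that are well conditioned enough to round to genuine rational certificates is where the real work lies. As a bonus, when the feasibility program for a carefully chosen polynomial that does have nonnegative trace turns out to be \emph{infeasible}, semidefinite duality produces a linear functional that is nonnegative on all hermitian squares and vanishes on all commutators yet takes a negative value on that polynomial, witnessing that it is not a sum of hermitian squares and commutators — the promised by-product.
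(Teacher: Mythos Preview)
Your overall strategy matches the paper's---reduce to $\tr(S_{m,k}(X^2,Y^2))\ge 0$ for symmetric $X,Y$, then seek a certificate of the form $S_{m,k}(X^2,Y^2)\csim\sum_i g_i^*g_i$ via a Gram matrix SDP---but your expectation that this direct approach ``succeeds through $m=13$'' is wrong, and that is the genuine gap. The paper proves (Example~\ref{non-sos}) that $S_{6,3}(X^2,Y^2)\notin\Th^2$: the affine family of Gram matrices for $(m,k)=(6,3)$ contains no positive semidefinite element. So your method already fails at $m=6$, and the table in Section~\ref{conclude} shows it continues to fail for many $(m,k)$ with $m\le 13$ (e.g.\ $(8,3)$, $(9,3)$, $(11,5)$, $(13,5)$, \dots). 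Your proposal, as written, cannot get past $m=5$.

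The missing ingredient is Hillar's descent theorem (Theorem~\ref{hillar}): if BMV fails at $(m,k)$ then it fails at every $(m',k')$ with $m'-k'\ge m-k$ and $k'\ge k$. This lets the paper go \emph{up} rather than stay put: instead of certifying every $(m,k)$ with $m\le 13$, it suffices to certify $(14,4)$ and $(14,6)$, and those two SDPs \emph{are} feasible. A second point you are missing is the monomial reduction of Proposition~\ref{wred}, which shows that any SOHS certificate for $S_{m,k}(X^2,Y^2)$ can be taken to use only words built from blocks $X^2$ and $Y^2$ with prescribed boundary letters, and moreover block-diagonalizes the Gram matrix; this cuts the relevant matrix sizes from your $\binom{m}{k}$ down to a few dozen, which is what makes the exact rational rounding for $(14,6)$ tractable at all. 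Finally, a small correction: the trivial range is $k\le 2$ or $m-k\le 2$, not just $k\in\{0,1,m-1,m\}$.
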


The coefficient of $t^k$ in $p$ is the trace of $S_{m,k}(A,B)$, the sum of all
words of length $m$ in $A$ and $B$ in which B appears exactly $k$ times
(and therefore $A$ exactly $m-k$ times).
It is easy to see that these coefficients are real for hermitian $A,B$.

Suppose $A,B$ are positive semidefinite $n\times n$ matrices.
For $k\le 2$ or $m-k\le 2$, each
word appearing in $S_{m,k}(A,B)$ has nonnegative trace as is easily seen.
This proves the conjecture for $m\le 5$. For $n\le 2$, $A$ can (as always)
be assumed to be diagonal and after a diagonal change of basis also $B$ has
only nonnegative entries. Hence the conjecture is trivial for $n\le 2$.
The first nontrivial case $(m,k,n)=(6,3,3)$ was verified by Hillar and Johnson
\cite{hj} with the help of a computer algebra system by considering entries
of both $3\times 3$ matrices, $A$ and $B$, as scalar and therefore 
\emph{commuting} variables. H\"agele \cite{hag} shifted the focus 
from scalars to symbolic computation with matrices (regardless of
their size) and gave a surprisingly simple argument  
settling the case $(m,k)=(7,3)$ and thus also
$(m,k)=(7,4)$ by symmetry. Combined with the easy observations from above, 
this proves Conjecture \ref{bmv} for $m=7$.

H\"agele then deduced the case $m=6$, which 
he could not solve directly with his technique, 
by appealing to the following seminal result due to Hillar \cite{hil}: 
If Conjecture \ref{bmv} is true for $m$, then it is
also true for all $m'<m$ \cite[Corollary 1.8]{hil}.
A strengthening \cite[Theorem 1.7]{hil} of this result (see Section
\ref{outline} for a precise statement)
is crucial for our main contribution:

\begin{theorem}\label{bmv9}
The {\rm BMV}\! Conjecture {\rm \ref{bmv}} holds for $m\le 13$.
\end{theorem}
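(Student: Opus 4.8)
The plan is to combine Hillar's reduction theorems with an algebraic positivity certificate that can be searched for by semidefinite programming.

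First I would reduce the problem to traces of individual homogeneous pieces. Every positive semidefinite real matrix equals $X^2$ for some symmetric $X$, so Conjecture~\ref{bmv} for a fixed $m$ is equivalent to the assertion that, for each $k$, the noncommutative polynomial $S_{m,k}(X^2,Y^2)$ has nonnegative trace on every tuple of real symmetric matrices of equal size. This claim is trivial for $k\le 2$ (and, by the symmetry $A\leftrightarrow B$, for $m-k\le 2$), and Hillar's descent \cite[Cor.~1.8]{hil} lets us replace ``all $m\le 13$'' by ``$m=13$''. Using in addition the sharper \cite[Theorem~1.7]{hil} (whose precise statement we recall in Section~\ref{outline}), the list of pairs $(m,k)$ that must still be handled directly shrinks to a short one.

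Second, the certificate. Work modulo cyclic equivalence $\csim$, i.e.\ modulo sums of commutators $pq-qp$; these have zero trace. If a symmetric noncommutative polynomial $f$ admits a representation $f\csim\sum_i h_i^*h_i$ with finitely many noncommutative polynomials $h_i$, then for symmetric $X,Y$ the matrix $h_i(X,Y)^*$ is the transpose of $h_i(X,Y)$, whence $\tr f(X,Y)=\sum_i\tr\bigl(h_i(X,Y)^*h_i(X,Y)\bigr)\ge 0$. Note that $S_{m,k}(X^2,Y^2)$ is indeed symmetric, since the set of words of length $m$ with $k$ letters $B$ is stable under reversal. So I would try to exhibit such a ``sum of hermitian squares modulo commutators'' representation of $f=S_{m,k}(X^2,Y^2)$. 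Concretely, since $f$ is homogeneous of degree $2(m-k)$ in $X$ and $2k$ in $Y$, each $h_i$ may be taken in the span $W$ of the $\binom{m}{k}$ words of length $m$ having $m-k$ letters $X$ and $k$ letters $Y$; writing $\vec w$ for the vector of these words, $f\csim\vec w^{\,*}G\,\vec w$ holds for a symmetric matrix $G$ indexed by $W$ if and only if $G$ satisfies an explicit linear system — one equation per cyclic-equivalence class of words of length $2m$, obtained by matching coefficients of $f$. Existence of such a $G$ that is in addition positive semidefinite is a semidefinite feasibility problem, and from a solution $G=L^{\!\top}L$ one reads off $f\csim(L\vec w)^*(L\vec w)$. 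For each pair on the short list I would set up and solve this program, exploiting the symmetries of $W$ (reversal of words, and the relabellings coming from cyclic invariance of the trace) to block-diagonalise $G$, then round the floating-point optimum to an exact rational $G$ in the relative interior of the feasible face and verify $G\succeq 0$ by an exact $LDL^{\!\top}$ factorisation.

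The main obstacle is twofold. Computationally, the Gram matrices are large — for instance $\binom{13}{6}=1716$ for $(m,k)=(13,6)$ — so the symmetry reduction and the rigorous rational verification are indispensable rather than optional. More fundamentally, not every $S_{m,k}(X^2,Y^2)$ with nonnegative trace is a sum of hermitian squares and commutators: for at least one pair on the list the semidefinite program turns out to be \emph{infeasible}, certified by a dual solution, namely an explicit linear functional that is nonnegative on all hermitian squares, vanishes on all commutators of the relevant degree, yet is negative on $S_{m,k}(X^2,Y^2)$. For such a pair, nonnegativity of $\tr S_{m,k}(X^2,Y^2)$ has to be extracted from \cite[Theorem~1.7]{hil} instead — which is exactly why that strengthening is crucial — and the same polynomial then furnishes the promised example of a noncommutative polynomial with nonnegative trace on all symmetric matrices of a given size that is nevertheless not a sum of hermitian squares and commutators.
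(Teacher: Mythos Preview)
Your overall architecture is exactly the paper's: reduce via Hillar's results and then search for sum-of-hermitian-squares certificates modulo commutators by SDP, rounding to exact rational Gram matrices. But there is a genuine strategic gap in how you deploy Hillar's theorem. You first reduce to $m=13$ and then propose to certify each remaining $(13,k)$ directly, ``falling back on \cite[Theorem~1.7]{hil}'' when the SDP is infeasible. That is circular: Theorem~1.7 is a \emph{descent} statement --- it deduces the case $(m,k)$ from an already-settled case $(m',k')$ with $m'-k'\ge m-k$ and $k'\ge k$ --- so it cannot rescue an infeasible $(13,k)$ without first proving some \emph{larger} pair. And at $m=13$ infeasibility is the rule, not the exception: among the nontrivial pairs with $3\le k\le 10$, only $S_{13,4}$ (and, by symmetry, $S_{13,9}$) lies in $\Theta^2$; the polynomials $S_{13,3}$, $S_{13,5}$, $S_{13,6}$ do not, so no certificate of your type exists for them, and no amount of symmetry reduction on the $\binom{13}{6}$-sized Gram matrix will produce one. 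The paper's key manoeuvre is therefore to go \emph{up}, not down: one proves $S_{14,4}(X^2,Y^2)\in\Theta^2$ and $S_{14,6}(X^2,Y^2)\in\Theta^2$, and then Hillar's Theorem~\ref{hillar} pulls these two certificates back to cover every $(13,k)$ and hence every $m\le 13$. The $(14,6)$ certificate is the substantial one.

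A secondary point: your Gram matrix is much larger than necessary. The paper's Proposition~\ref{wred} shows that the $h_i$ may be taken not merely bihomogeneous but supported on words lying in $\{X^2,Y^2\}^{m/2}$ or in one of a few shifted copies thereof, and that the Gram matrix block-diagonalises along these subsets. This monomial reduction is far more aggressive than the word-reversal symmetry you invoke; it is what brings the $(14,6)$ problem down to blocks of size $15$ and $35$ and makes both the SDP and the exact rational verification feasible.
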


We exploit semidefinite programming to find certain certificates for
nonnegativity of $\tr(S_{m,k}(A,B))$ which are dimensionless (i.e., valid
for all $n$). These certificates are algebraic identities in the ring
of polynomials in two \emph{noncommuting} variables involving sums of hermitian
squares. The found identities are exact though obtained with the help of
numerical computations. But they exist only for certain pairs $(m,k)$ and we
have to rely on Hillar's work to deduce Theorem \ref{bmv9}. For instance,
such a sum of
hermitian squares certificate does not exist for $(m,k)=(6,3)$, see Example
\ref{non-sos}.

With the benefit of hindsight, H\"agele's argument can be read
as such a certificate for the case $(m,k)=(7,3)$. 
However, the certificates we give for $(m,k)=(14,4)$ and
$(m,k)=(14,6)$ are much more involved and seem to be impossible 
to find by hand.

This paper is organized as follows. Section \ref{m2s} 
develops the appropriate algebraic framework needed 
for the desired nonnegativity certificates.
In Section \ref{s2m} the existence of such a certificate
is transformed into a linear matrix inequality (LMI) 
enabling us to search for these certificates using semidefinite
programming (SDP).
Section \ref{outline} explains the overall argument for the
proof of Theorem \ref{bmv9}. The proof
itself is presented in full detail in Section \ref{export}.
A synopsis of our results
and other recent developments
is given in Section \ref{conclude},
where we also relate the BMV conjecture
to another just as old open problem of Connes on II$_1$ factors. 
Finally, in the appendix we streamline the proof of the mentioned
crucial result of Hillar and 
give an alternative argument to prove
the BMV conjecture for $m=13$ avoiding Hillar's theorem.

\section{From matrices to symbols}\label{m2s}

The gist of our method is to model the matrices as
\emph{noncommuting} variables instead of disaggregating them
into scalar entries modeled by \emph{commuting} variables.
To this end we introduce the ring of polynomials in two
noncommuting variables.

\begin{remark}
It is easy to see \cite[Lemma 3.15]{ksconnes} that the
nonnegativity of $\tr(S_{m,k}(A,B))$ for all positive semidefinite
\emph{complex}
$A$ and $B$ of all sizes need only be checked for all positive semidefinite
(in particular symmetric) \emph{real} $A$ and $B$ of all sizes 
(by identifying $n\times n$ complex matrices with $2n\times 2n$ real
matrices).
We therefore
work over the real numbers.
\end{remark}

We write $\axy$ for the monoid freely generated by $X$ and $Y$, i.e.,
$\axy$ consists of \emph{words} in two letters (including the empty word
denoted by $1$). Let $\R\axy$ denote the associative $\R$-algebra freely
generated by $X$ and $Y$. The elements of $\R\axy$
are polynomials in the noncommuting variables $X$ and $Y$ with coefficients
in $\R$. An element of the form $aw$ where $0\neq a\in\R$ and
$w\in\axy$ is called a \emph{monomial} and $a$ its
\emph{coefficient}. Hence words are monomials whose coefficient is
$1$. We endow $\R\axy$ with the involution $p\mapsto p^*$
fixing $\R\cup\{X,Y\}$ pointwise. Recall that an \emph{involution}
has the properties $(p+q)^*=p^*+q^*$, $(pq)^*=q^*p^*$
and $p^{**}=p$ for all $p,q\in\R\axy$. In particular, for each word
$w\in\axy$, $w^*$ is its reverse.

\begin{definition}
Two polynomials
$f,g\in\R\axy$ are called \emph{cyclically equivalent} ($f\csim g$)
if $f-g$ is a sum of commutators in $\R\axy$. Here elements of
the form $pq-qp$ are called \emph{commutators} ($p,q\in\R\axy$).
\end{definition}

This definition reflects the fact that $\tr(AB)=\tr(BA)$ for
square matrices $A$ and $B$ of the same size.
The following proposition shows that cyclic equivalence can easily be checked and
will be used tacitly in the sequel. Part (c) is a special case of
\cite[Theorem 2.1]{ksconnes} motivating the definition of cyclic equivalence.

\begin{proposition}\label{cyceqrem}
\begin{enumerate}[\rm (a)]
\item \label{cyceqword}
For $v,w\in\axy$, we have $v\csim w$ if and only if there are
$v_1,v_2\in\axy$ such that $v=v_1v_2$ and $w=v_2v_1$.
\item\label{cyceqalg}
Two polynomials $f=\sum_{w\in\axy}a_ww$ and $g=\sum_{w\in\axy}b_ww$
$(a_w,b_w\in\R)$ are cyclically equivalent if and only if for each $v\in\axy$,
$$\sum_{\dd{w\in\axy}{w\csim v}}a_w=\sum_{\dd{w\in\axy}{w\csim v}}b_w.$$
\item\label{trace0} Suppose $f\in\R\axy$ and $f^*=f$. Then $f\csim 0$ if and only if $\tr(f(A,B))=0$
for all real symmetric matrices $A$ and $B$ of the same size.
\end{enumerate}
\end{proposition}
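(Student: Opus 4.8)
The plan is to prove the three parts in the order (a), (b), (c), since each feeds into the next.

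For part (a), the ``if'' direction is immediate: $v_1v_2 - v_2v_1$ is by definition a commutator, so $v_1v_2 \csim v_2v_1$. For the ``only if'' direction, suppose $v \csim w$, i.e., $v - w = \sum_{i}(p_iq_i - q_ip_i)$ for some $p_i,q_i \in \R\axy$. First I would reduce to the case where the $p_i,q_i$ are words: expand each $p_i,q_i$ into monomials and use bilinearity of the commutator, so that $v - w$ is a sum of expressions $a(uu' - u'u)$ with $u,u'$ words and $a \in \R$. Now I would introduce the equivalence relation on words generated by $uu' \sim u'u$ (``cyclic rotation''), which on a single word amounts to cyclic permutation of its letters; note $\csim$ restricted to single words is exactly the $\R$-linear span relation coming from this. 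The key observation is that if we group words into these rotation classes, then within the free vector space $\R\axy$ the element $v - w$ lies in the span of vectors of the form (word) $-$ (cyclic rotation of that word), hence has coefficient sum zero on every rotation class. Since $v - w$ is supported on at most the two words $v$ and $w$, either $v = w$ (and we take $v_1 = v$, $v_2 = 1$) or $v \ne w$ with coefficients $+1$ on $v$ and $-1$ on $w$, forcing $v$ and $w$ to lie in the same rotation class, i.e., $v$ is a cyclic rotation of $w$, which is precisely the assertion $v = v_1v_2$, $w = v_2v_1$.

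Part (b) then follows quickly. Since $f \csim g$ iff $f - g \csim 0$, it suffices to treat $g = 0$, i.e., characterize when $\sum_w a_w w \csim 0$. By the same reduction as above, $f \csim 0$ means $f$ lies in the $\R$-span of words-minus-their-cyclic-rotations, equivalently (using part (a) to identify rotation classes with $\csim$-classes on words) that the coefficient sum of $f$ over each $\csim$-class vanishes: $\sum_{w \csim v} a_w = 0$ for all $v$. Conversely, if all these class sums vanish, write $f$ as a sum over $\csim$-classes; on each class $C$ with $\sum_{w\in C} a_w = 0$, fix a representative $w_0 \in C$ and write $\sum_{w \in C} a_w w = \sum_{w \in C} a_w (w - w_0)$, and each $w - w_0$ is a commutator by part (a). Summing over classes exhibits $f$ as a sum of commutators. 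Applying this to $f - g$ gives the stated equivalence.

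For part (c), the ``only if'' direction is easy: if $f \csim 0$ then $f = \sum_i (p_iq_i - q_ip_i)$, and for any real symmetric $A,B$ of the same size, $\tr(p_i(A,B)q_i(A,B) - q_i(A,B)p_i(A,B)) = 0$ since trace is cyclic, hence $\tr(f(A,B)) = 0$. The substantive direction is the converse, and this is the main obstacle: we must show that if $f^* = f$ and $\tr(f(A,B)) = 0$ for all symmetric $A,B$ of all sizes, then the $\csim$-class sums of $f$ all vanish, so that part (b) applies. Here I would invoke \cite[Theorem 2.1]{ksconnes}, as the excerpt indicates part (c) is a special case of it; absent that, the strategy would be to pick, for a given $\csim$-class $C$ of words of length $m$, generic symmetric matrices $A, B$ of size $m$ (or large enough) and observe that $\tr(S(A,B))$, where $S$ is the symmetrized sum over $C$, is a nonzero polynomial in the entries — so that a clever choice (e.g. using $0/1$ matrices realizing a directed cycle) separates distinct classes, forcing each coefficient-sum of $f$ to be zero. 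Concretely one shows the linear functionals $g \mapsto \tr(g(A,B))$, as $(A,B)$ ranges over all symmetric pairs, separate the $\csim$-classes, which combined with $f^* = f$ (ensuring $f$ is a genuine combination of full $\csim$-classes) yields $f \csim 0$. I would present the short ``only if'' argument in full and cite \cite{ksconnes} for the converse.
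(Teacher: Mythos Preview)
The paper does not actually prove this proposition: parts (a) and (b) are stated as easy facts to be used tacitly, and part (c) is explicitly attributed to \cite[Theorem 2.1]{ksconnes}. Your arguments for (a) and (b) are correct and fill in precisely the details the paper omits; the reduction to commutators of words followed by the class-sum observation is the standard route. For (c), your plan to give the easy direction in full and cite \cite{ksconnes} for the converse matches what the paper does.

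One caveat on your informal sketch of the hard direction of (c): the trace functionals $g\mapsto\tr(g(A,B))$ with $A,B$ symmetric do \emph{not} separate all $\csim$-classes of words, because $\tr(w(A,B))=\tr(w(A,B)^T)=\tr(w^*(A,B))$ always; they only separate pairs $\{C,C^*\}$. The hypothesis $f^*=f$ is needed not because it makes $f$ ``a genuine combination of full $\csim$-classes'' (any polynomial is that), but because it forces $\sum_{w\in C}a_w=\sum_{w\in C^*}a_w$, so that separating the pairs $\{C,C^*\}$ suffices to conclude each class sum vanishes. Without $f^*=f$ the implication genuinely fails: for example $f=X^2YXY^2-Y^2XYX^2$ satisfies $\tr(f(A,B))=0$ for all symmetric $A,B$, yet $f\not\csim 0$ since the two words lie in distinct cyclic classes. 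Since you are citing \cite{ksconnes} for this direction anyway, this does not invalidate your write-up, but the parenthetical explanation of the role of $f^*=f$ should be fixed.
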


\begin{definition}
For each subset $S\subseteq\R\axy$, we introduce the set
$$\sym S:=\{g\in S\mid g^*=g\}$$
of its \emph{symmetric elements}. Elements of the form $g^*g$
($g\in\R\axy$) are called \emph{hermitian squares}. We denote by 
$$
\Si^2:=\{ \sum_i g_i^*g_i\mid g_i\in\R\axy\}\subseteq\sym\R\axy
$$
the convex cone of all sums of hermitian squares and by 
\begin{align*}
\Th^2& :=\{ f\in\R\axy\mid \exists g\in\Si^2:\, f\csim g\}\\
&\;=\Si^2+\{ \sum_i (g_ih_i-h_ig_i)\mid g_i,h_i\in\R\axy\} \subseteq\R\axy
\end{align*}
the convex cone
of all polynomials that are cyclically equivalent to a sum of 
hermitian squares.
\end{definition}

The following theorem proved in \cite{hel} also holds for several variables
and motivates the use of sums of hermitian squares (see \cite{hp} for a survey
of recent developments). We will only use
the easy implication from \eqref{helton-sos} to \eqref{helton-matrix}.

\begin{theorem}[Helton]\label{helton}
The following are equivalent for $f\in\sym\R\axy$:
\begin{enumerate}[{\rm(i)}]
\item $f\in\Si^2$; \label{helton-sos}
\item \label{helton-matrix}
$f(A,B)$ is positive semidefinite for all $n\in\N$ and
$A,B\in\sym\R^{n\times n}$.
\end{enumerate}
\end{theorem}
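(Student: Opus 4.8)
The implication needed in the sequel, \eqref{helton-sos} $\Rightarrow$ \eqref{helton-matrix}, is immediate: if $f=\sum_i g_i^*g_i$, then for all $n$ and all $A,B\in\sym\R^{n\times n}$ we have $f(A,B)=\sum_i g_i(A,B)^{\mathsf T}g_i(A,B)\succeq 0$, being a sum of positive semidefinite matrices (here we use that the involution $*$ corresponds to transposition on symmetric substitutions). The reverse implication \eqref{helton-matrix} $\Rightarrow$ \eqref{helton-sos} is the substance of Helton's theorem, and the plan for it is a separation argument combined with a Gelfand--Naimark--Segal-type construction.

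Argue by contraposition. If $\deg f=2d$ and $f\in\Si^2$, then in fact $f=\sum_i g_i^*g_i$ with each $\deg g_i\le d$, because the top-degree part of a sum of hermitian squares is again such a sum and cannot cancel. Writing $\R\axy_e$ for the (finite-dimensional) space of polynomials of degree $\le e$, it therefore suffices to show that the convex cone $C$ of sums of hermitian squares of elements of $\R\axy_d$ is \emph{closed} in $\sym\R\axy_{2d}$: then, if $f\notin C$, the Hahn--Banach separation theorem provides an $\R$-linear functional $L\colon\sym\R\axy_{2d}\to\R$ with $L(C)\subseteq\nn$ and $L(f)<0$. Closedness is obtained by a Carath\'eodory-type argument, using that the leading forms of the $g_i$ bound their norms, so a convergent sequence of elements of $C$ cannot force the representing tuples to escape to infinity. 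Adding a small positive multiple of an evaluation functional $g\mapsto\langle g(S,T)e,e\rangle$ with $S,T$ symmetric is harmless for the strict inequality $L(f)<0$, so we may assume $L(1)=1$.

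Now run the GNS construction. On $\R\axy_d$ define the symmetric bilinear form $\langle p,q\rangle:=L\bigl(\tfrac12(p^*q+q^*p)\bigr)$; it is positive semidefinite since $L$ is nonnegative on hermitian squares of degree $\le 2d$. Passing to the quotient by its radical yields a finite-dimensional real Hilbert space $H$ with a distinguished unit vector $\xi$, the class of $1$. The crux is to realize $X$ and $Y$ as \emph{symmetric} operators $\hat X,\hat Y$ on $H$ so that the class of any word $w$ equals $w(\hat X,\hat Y)\xi$; granting this, and using $f=f^*$, one computes $\langle f(\hat X,\hat Y)\xi,\xi\rangle=L(f)<0$, so $f(\hat X,\hat Y)$ is not positive semidefinite, contradicting \eqref{helton-matrix}. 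This last step is the main obstacle and the genuine content of Helton's theorem: left multiplication by $X$ only maps $\R\axy_{d-1}$ into $\R\axy_d$, hence does not obviously descend to an operator on all of $H$. One overcomes it either by a limiting argument over growing degrees, producing a functional on all of $\R\axy$ whose associated multiplication maps are everywhere defined and symmetric, or by a flat-extension argument in the spirit of Curto--Fialkow, enlarging $H$ so that the images of $\R\axy_{d-1}$ and $\R\axy_d$ coincide and the multiplications become bona fide self-adjoint operators. With such a representation in hand, the contradiction above completes the proof. For the purposes of this paper only the easy direction is used.
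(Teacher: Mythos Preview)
The paper does not itself prove this theorem: it is quoted from \cite{hel}, and the paper explicitly remarks that only the easy implication \eqref{helton-sos} $\Rightarrow$ \eqref{helton-matrix} is used. Your argument for that direction is correct and matches what the paper needs.

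Your sketch of the converse via Hahn--Banach separation followed by a GNS-type construction is a standard modern route to results of this kind (closer in spirit to McCullough--Putinar than to Helton's original operator-theoretic proof), and you are honest that the extension step---getting genuine self-adjoint operators out of the truncated data---is where the real work lies and is only outlined. Since the paper offers no proof of this direction to compare against, there is nothing further to adjudicate; as a self-contained account your sketch would need the closedness and the perturbation to $L(1)=1$ spelled out more carefully, but for the purposes of this paper the easy direction suffices.
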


To obtain the desired type of certificates we try to merge
Proposition \ref{cyceqrem}\eqref{trace0} with Theorem \ref{helton}.
However, such certificates do not always exist.

\begin{remark}\label{helton-trace}
Consider the following conditions for $f\in\R\axy$: 
\begin{enumerate}[{\rm(i)}]
\item $f\in\Th^2$; \label{hel-tr1}
\item \label{hel-tr2}
$\tr (f(A,B))\geq 0$ for all $n\in\N$ and
$A,B\in\sym\R^{n\times n}$.
\end{enumerate}
Then
\eqref{hel-tr1} implies \eqref{hel-tr2} but not vice versa.
For instance, $$YX^4Y+XY^4X-3XY^2X+1\in\sym\R\axy$$ satisfies
\eqref{hel-tr2} but not \eqref{hel-tr1} (see \cite[Example 4.4]{ksconnes} for 
details).
Later on we will see further such examples.
\end{remark}

\section{From symbols to matrices}\label{s2m}

To search systematically for the certificates just introduced,
we develop a \emph{noncommutative} version of the Gram matrix method.
The corresponding theory for polynomials in \emph{commuting} variables
is well-known and has been studied and used extensively, see 
e.g.~\cite{clr,ps}.

Checking whether a polynomial in
noncommuting variables is an element of $\Si^2$ or $\Th^2$, respectively, 
is most efficiently done via the so-called \textit{Gram matrix method}.
Given a symmetric $f\in\R\axy$ of degree $\leq 2d$ and a vector $\bar v$ containing 
all words in $X,Y$ of degree $\leq d$, there is a real symmetric matrix $G$ with
$f=\bar v^* G \bar v$.
(Here $\bar v^*$ arises from $\bar v$ by applying the involution entrywise to the
transposed vector $\bar v^t$.) Every such matrix $G$ is called a \emph{Gram matrix}
for $f$. Obviously, the set of all Gram matrices for $f$ is an affine subspace.

\begin{example}\label{runex}
Consider the polynomial $$h:=X^4+2XYX+2X^2+Y^2+2Y+1\in\sym\R\axy.$$
Since $h$ has degree four, we choose
$$\bar v:=[1,X,Y,X^2,XY,YX,Y^2]^t.$$
Then every Gram matrix for $h$ has the form
$$G=\begin{bmatrix}\ 1\ &0&1&\ a\ &\ 0\ &\ 0\ &\ b\ \\
                   0&2-2a&0&0&0&1&0\\
                   1&0&1-2b&0&0&0&0\\
                   a&0&0&1&0&0&0\\
                   0&0&0&0&0&0&0\\
                   0&1&0&0&0&0&0\\
                   b&0&0&0&0&0&0
                   \end{bmatrix}\in\sym\R^{7\times 7}.$$
We will revisit this example below.
\end{example}

From Cholesky's decomposition we deduce that $f\in\sym\R\axy$ is a sum of 
hermitian squares if and only
if it has a positive semidefinite Gram matrix. Indeed, if $G=C^*C$ is a positive semidefinite
Gram matrix for $f$, then $f=\bar v^*C^*C\bar v=(C\bar v)^*(C\bar v)=\sum_ig_i^*g_i\in\Si^2$ where
$g_i\in\R\axy$ is the $i$-th entry of the vector $C\bar v$. The converse follows the same line of
reasoning.

\medskip\noindent
{\bf Example \ref{runex} continued.} There is no positive semidefinite Gram matrix $G$ for $h$
since the determinant of the submatrix
$$\begin{bmatrix}G_{22}&G_{26}\\G_{62}&G_{66}\end{bmatrix}=\begin{bmatrix}2-2a&1\\1&0\end{bmatrix}$$
is always negative. Hence $h\not\in\Si^2$.

\medskip
The existence of a sum of hermitian squares decomposition of $f\in\sym\R\axy$ is equivalent
to an LMI feasibility problem. As such it can be decided by solving the SDP 
$$\text{minimize\ }\tr(G)\quad\text{subject to\quad $\bar v^*G\bar v=f$, $G$ positive semidefinite.}$$
Note that $\bar v^*G\bar v=f$ are just linear constraints on the entries of $G$ as one sees
by comparing coefficients. The objective function $G\mapsto\tr(G)$ is often a good choice for
finding nice low rank matrices $G$ but can be replaced by any other function linear in the entries
of $G$. If the polynomial is dense (no sparsity), the dimension of the 
LMI
is equal to $(2^{d+1}-1)\times (2^{d+1}-1)$. For more on SDP,
we refer the reader to the survey \cite{tod}.

Likewise, checking whether $f\in\Th^2$ can be done by solving the SDP
$$\text{minimize\ }\tr(G)\quad\text{subject to\quad $\bar v^*G\bar v\csim f$, $G$ positive
semidefinite.}$$
By Proposition \ref{cyceqrem}\eqref{cyceqalg}, $\bar v^*G\bar v\csim f$ are again linear constraints on
the entries of $G$.

For the sake of convenience, from now on a real symmetric matrix $G$  will be called a \emph{Gram matrix} for $f\in\R\axy$ (with respect to a vector of words $\bar v$) if $f\csim \bar v^*G\bar v$.

\medskip\noindent
{\bf Example \ref{runex} continued.} Every Gram matrix (in the new sense) for $h$ has the form
$$
\begin{bmatrix}
1 & 0 & 1 & 1-\frac{1}{2} a_1&
-a_2-a_3 & a_2 &
\frac{1}{2}-\frac{1}{2} a_4 \\
0 & a_1 & a_3 & 0 &
-a_6-a_7+1 & a_6 &
-a_8-a_9 \\
1 & a_3 & a_4 & a_7 & a_8
& a_9 & 0 \\
1-\frac{1}{2} a_1 & 0 & a_7 & 1 &
-a_{10} & a_{10} & -\frac{1}{2}
a_{11}-\frac{1}{2} a_{12}\\
-a_2-a_3 & -a_6-a_7+1 &
a_8 & -a_{10} & a_{11} & 0 &
-a_5 \\
a_2 & a_6 & a_9 & a_{10} & 0
& a_{12} & a_5 \\
\frac{1}{2}-\frac{1}{2} a_4 &
-a_8-a_9 & 0 & -\frac{1}{2}
a_{11}-\frac{1}{2} a_{12} & -a_5 &
a_5 & 0
\end{bmatrix}.
$$
Setting $a_4=a_7=1$ and all other $a_i$ to zero, we get the positive semidefinite matrix
$G=\begin{bmatrix}1&0&1&1&0&0&0\end{bmatrix}^*\begin{bmatrix}1&0&1&1&0&0&0\end{bmatrix}$
with corresponding representation $h\csim (X^2+Y+1)^2\in\Si^2$, i.e., $h\in\Th^2$.

\medskip
In the proof of our main result we will use the Gram matrix method
to show that certain $S_{m,k}(X^2,Y^2)\in\Th^2$.
We start by dramatically reducing the sizes of corresponding SDPs with a
monomial reduction. For this, we need a technical lemma.

\begin{lemma}\label{trrad} Let 
$p_i\in\R\axy$.
\begin{enumerate}[{\rm (a)}]
\item If for $A,B\in\sym\R^{n\times n}$, $\tr\left(\sum_i (p_i^*p_i)(A,B)\right)=0$, then $p_i(A,B)=0$ for all $i$.\label{trradloc}
\item If $\sum_ip_i^*p_i\csim 0$, then $p_i=0$ for all $i$.\label{trradglob}
\end{enumerate}
\end{lemma}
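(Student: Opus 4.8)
The plan is to establish (a) first and then deduce (b) from it.

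For part (a): the key point is that, for symmetric $A,B\in\sym\R^{n\times n}$, the evaluation homomorphism $\R\axy\to\R^{n\times n}$, $X\mapsto A$, $Y\mapsto B$, carries the involution ${}^*$ to matrix transposition, so $(p_i^*p_i)(A,B)=p_i(A,B)^t\,p_i(A,B)$ is positive semidefinite and $\tr\big((p_i^*p_i)(A,B)\big)$ equals the sum of the squares of all entries of $p_i(A,B)$. Hence the hypothesis reads $0=\sum_i\sum_{k,l}\big(p_i(A,B)\big)_{kl}^2$, a vanishing sum of real squares, which forces every entry of every $p_i(A,B)$ to be $0$. This part is essentially the observation that the trace of a sum of positive semidefinite matrices vanishes only if each summand does, and I foresee no difficulty with it.

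For part (b): since $f:=\sum_ip_i^*p_i$ is symmetric and $f\csim 0$, Proposition \ref{cyceqrem}\eqref{trace0} gives $\tr\big(f(A,B)\big)=0$ for all $n$ and all $A,B\in\sym\R^{n\times n}$, whence part (a) yields $p_i(A,B)=0$ for every $i$ and all such $A,B$. The remaining point is to conclude $p_i=0$ in $\R\axy$; that is, I need that a nonzero polynomial in $\R\axy$ cannot vanish on all pairs of real symmetric matrices of every size. I would prove this by exhibiting a sufficiently rich evaluation: fix $p\ne 0$ with $\deg p=d$ and substitute $A:=\mathrm{diag}(\lambda_1,\dots,\lambda_{d+1})$ and a generic symmetric matrix $B:=(\beta_{ij})$, where the $\lambda_i$ and the $\beta_{ij}$ $(i\le j)$ are independent indeterminates over $\R$; since $\R$ is infinite, the hypothesis forces $p(A,B)$ to vanish as a matrix over $\R[\lambda_i,\beta_{ij}]$. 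Grouping the words of $p$ by the number $k$ of occurrences of $Y$ is the same as grouping by degree in the $\beta$'s, so the part of $p$ supported on words with exactly $k$ letters $Y$ already evaluates to $0$. For a word $w=X^{a_0}YX^{a_1}Y\cdots YX^{a_k}$ one then reads off, in the $(1,2)$-entry of $w(A,B)$, the coefficient of the monomial coming from the ``path'' $1\to 3\to 4\to\cdots\to(k+1)\to 2$: a direct computation gives $\lambda_1^{a_0}\lambda_2^{a_k}\lambda_3^{a_1}\lambda_4^{a_2}\cdots\lambda_{k+1}^{a_{k-1}}$, and distinct exponent tuples $(a_0,\dots,a_k)$ produce distinct monomials in the distinct variables $\lambda_1,\dots,\lambda_{d+1}$. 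Hence all coefficients of $p$ vanish, i.e.\ $p=0$. Applying this to each $p_i$ with matrices of size $\deg p_i+1$ finishes part (b).

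I expect the only genuine obstacle to be this last step — the ``identity-freeness'' of the free algebra $\R\axy$ over symmetric matrices; the reduction via Proposition \ref{cyceqrem}\eqref{trace0} together with part (a), and part (a) itself, are routine. This identity-freeness is a standard fact about free $*$-algebras and could instead simply be quoted; the diagonal-plus-generic-symmetric substitution above is merely the concrete way I would carry it out, the only care required being the bookkeeping that lets each word contribute its own monomial.
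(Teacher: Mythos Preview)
Your proof is correct and follows the same route as the paper: part (a) via the observation that $\tr\big(p_i(A,B)^tp_i(A,B)\big)$ is a sum of squares of real entries, and part (b) by applying (a) to all symmetric pairs and then invoking that no nonzero element of $\R\axy$ vanishes on all symmetric matrices of all sizes. The only difference is that the paper simply cites this last identity-freeness fact (referring to \cite[Proposition~2.3]{ksnirgends}), whereas you supply a self-contained argument via the diagonal-plus-generic-symmetric substitution; your argument is sound (the Eulerian-path uniqueness guarantees that the chosen $\beta$-monomial arises from a single index sequence, and the resulting $\lambda$-monomials separate words), with the trivial caveat that the case $k=0$ should be read off a diagonal entry rather than the $(1,2)$-entry.
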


\begin{proof}
(a) Denote by $e_j$ the canonical basis vectors of $\R^n$. Then
$$0=\tr(\sum_i(p_i^*p_i)(A,B))=\sum_{i,j}\langle(p_i^*p_i)(A,B)e_j,e_j\rangle\\
=\sum_{i,j}\langle p_i(A,B)e_j,p_i(A,B)e_j\rangle.$$
Hence $p_i(A,B)e_j=0$ for all $i,j$ and thus $p_i(A,B)=0$ for all $i$.\\
(b) If $\sum_ip_i^*p_i\csim 0$, then $\tr(\sum_ip_i(A,B)^*p_i(A,B))=0$, and by the above,
$p_i(A,B)=0$ for all symmetric $A$ and $B$ of all sizes $n$. This implies $p_i=0$ for all
$i$ (see e.g.~\cite[Proposition 2.3]{ksnirgends}).
\end{proof}

Not only do we drastically reduce the number of words needed in the Gram method
for $S_{m,k}(X^2,Y^2)$ but we also impose a block structure on the Gram matrix $G$
with blocks $G_i$. This is done in the following proposition. We use
self-explanatory notation like $\{X^2,Y^2\}^\ell$ for the set of all
words that are concatenations of $\ell$ copies of $X^2$ and $Y^2$.

\begin{proposition}\label{wred} Fix $m,k\in\N$.
\begin{enumerate}[\rm (a)]
\item\label{GramEE} If $m$ and $k$ are even, set
\begin{align*}
V_1 &:= \left\{v\in\{X^2,Y^2\}^{\frac m2} \mid \deg_X v=m-k,\,\deg_Y v=k\right\}, \\
V_2 &:= \left\{v\in X \{X^2,Y^2\}^{\frac m2-1} X \mid \deg_X v=m-k,\,\deg_Y v=k\right\}, \\
V_3 &:= \left\{v\in Y \{X^2,Y^2\}^{\frac m2-1} Y \mid \deg_X v=m-k,\,\deg_Y v=k\right\}.
\end{align*}
\item\label{GramOE} If $m$ is odd and $k$ is even, set
\begin{align*}
V_1 &:= \left\{v\in X \{X^2,Y^2\}^{\frac{m-1}2}  \mid \deg_X v=m-k,\,\deg_Y v=k\right\}, \\
V_2 &:= \left\{v\in  \{X^2,Y^2\}^{\frac{m-1}2} X \mid \deg_X v=m-k,\,\deg_Y v=k\right\}.
\end{align*}
\item\label{GramOO} If $m$ and $k$ are odd, set
\begin{align*}
V_1 &:= \left\{v\in Y \{X^2,Y^2\}^{\frac{m-1}2}  \mid \deg_X v=m-k,\,\deg_Y v=k\right\}, \\
V_2 &:= \left\{v\in  \{X^2,Y^2\}^{\frac{m-1}2} Y \mid \deg_X v=m-k,\,\deg_Y v=k\right\}.
\end{align*}
\item\label{GramEO} If $m$ is even and $k$ is odd, set
\begin{align*}
V_1 &:= \left\{v\in X \{X^2,Y^2\}^{\frac m2-1} Y \mid \deg_X v=m-k,\,\deg_Y v=k\right\}, \\
V_2 &:= \left\{v\in Y \{X^2,Y^2\}^{\frac m2-1} X \mid \deg_X v=m-k,\,\deg_Y v=k\right\}.
\end{align*}
\end{enumerate}
Let $\bar v_i$ denote the vector $[v]_{v\in V_i}$. Then
$S_{m,k}(X^2,Y^2)\in \Th^2$
if and only if there exist positive semidefinite matrices
$G_i\in\sym\R^{V_i\times V_i}$ such that
\begin{equation}\label{gram}
S_{m,k}(X^2,Y^2)\csim\sum_i\bar v_i^*G_i\bar v_i.
\end{equation}
If $G_i=C_i^*C_i$ and
$C_i\in\R^{J_i\times V_i}$ $(J_i$ some index set$)$, then with
$[p_{i,j}]_{j\in J_i}:=C_i\bar v_i$ we have
\begin{equation}\label{smksos}
S_{m,k}(X^2,Y^2)\csim\sum_{i,j}p_{i,j}^*p_{i,j}.
\end{equation}
\end{proposition}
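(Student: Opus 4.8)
The plan is to deduce the block decomposition from the general Gram matrix SDP for $\Th^2$ by exploiting a grading argument on words, together with Lemma \ref{trrad}\eqref{trradglob}. First I would recall that by the discussion preceding Lemma \ref{trrad}, $S_{m,k}(X^2,Y^2)\in\Th^2$ means precisely that there is a positive semidefinite Gram matrix $G$ with $S_{m,k}(X^2,Y^2)\csim\bar w^*G\bar w$ where $\bar w$ lists \emph{all} words of degree $\le m$. The crucial observation is that $S_{m,k}(X^2,Y^2)$ is homogeneous: every word occurring in it has degree exactly $m$, with $\deg_X=m-k$ and $\deg_Y=k$. Hence one can restrict $\bar w$ to words $w$ with $\deg_X w+\deg_Y w=m$ (the top-degree part), and then further observe that for such a word, in any monomial $w_1^*w_2$ of degree $m$ contributing to $\bar w^*G\bar w$, the degrees must add up correctly; but more is true after passing to cyclic equivalence.

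Next I would carry out the parity/grading reduction. Writing $X^2$ and $Y^2$ in terms of $X,Y$, a word of length $m$ in $X,Y$ is cyclically equivalent to a word in $X^2,Y^2$ (possibly with a single $X$ or $Y$ ``wrapped around'') exactly in the cases enumerated in (a)--(d); this is where the four cases according to the parities of $m$ and $k$ come from. Concretely, the map $w_1\mapsto w_1^*$ on words of degree $\le m/2$ (or $(m-1)/2$) and the requirement $\deg_X(w_1^*w_2)=m-k$, $\deg_Y(w_1^*w_2)=k$ force $w_1,w_2$ to lie in one of the listed sets $V_i$; moreover words coming from different $V_i$ produce, upon symmetrization, monomials that are \emph{not} cyclically equivalent to the monomials of $S_{m,k}(X^2,Y^2)$ unless their contribution cancels. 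Using Lemma \ref{trrad}\eqref{trradglob} I can then argue that the off-diagonal blocks (coupling distinct $V_i$) may be taken to be zero: if $G=C^*C$ and $C\bar v=[p_{i,j}]$, grouping the $p_{i,j}$ by which $V_i$ their leading words come from and subtracting the cross terms — which sum cyclically to $0$ — shows by the lemma that those cross terms vanish identically, so $G$ decomposes as $\bigoplus_i G_i$ with $G_i\in\sym\R^{V_i\times V_i}$.

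Having established \eqref{gram}, the final identity \eqref{smksos} is then immediate: factor each positive semidefinite block as $G_i=C_i^*C_i$ with $C_i\in\R^{J_i\times V_i}$, set $[p_{i,j}]_{j\in J_i}:=C_i\bar v_i$, and compute
\[
\sum_i\bar v_i^*G_i\bar v_i=\sum_i\bar v_i^*C_i^*C_i\bar v_i=\sum_i(C_i\bar v_i)^*(C_i\bar v_i)=\sum_{i,j}p_{i,j}^*p_{i,j},
\]
which is cyclically equivalent to $S_{m,k}(X^2,Y^2)$ by \eqref{gram}. Conversely, \eqref{smksos} exhibits $S_{m,k}(X^2,Y^2)$ as cyclically equivalent to a sum of hermitian squares, hence in $\Th^2$ by definition.

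The main obstacle is the bookkeeping in the grading/parity reduction: one must carefully verify that \emph{every} word of length $m$ in $X,Y$ with the prescribed $X$- and $Y$-degrees is cyclically equivalent to one of the form $\bar v_i^*\,(\text{block})\,\bar v_i$ for exactly one $i$, that the four cases (a)--(d) exhaust all parities, and — most delicately — that the cross-block terms really do sum cyclically to zero so that Lemma \ref{trrad}\eqref{trradglob} applies and lets us discard them. Once this combinatorial core is in place, the SDP/Cholesky part is routine.
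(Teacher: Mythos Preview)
Your outline has the right overall shape, but there are two genuine gaps that the paper's proof handles differently and which your sketch does not cover.

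First, the homogeneity reduction is not as cheap as you suggest. Knowing that $S_{m,k}(X^2,Y^2)$ is bihomogeneous does \emph{not} by itself force the $p_j$ in a representation $S_{m,k}(X^2,Y^2)\csim\sum_jp_j^*p_j$ to be bihomogeneous of bidegree $(m-k,k)$, because cyclic equivalence allows a priori large cancellations among non-homogeneous pieces. The paper establishes this via a scaling argument: substitute $(A,\lambda B)$, compare $\lambda$-degrees on both sides as polynomials in $\lambda$, and invoke Lemma~\ref{trrad}\eqref{trradloc} (not \eqref{trradglob}) to kill the top-degree pieces $P_j$ if they had degree exceeding $k$ in $Y$. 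Only then can one conclude $p_j\in\Span_\R W$ for the set $W$ of words of the correct bidegree.

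Second, and more seriously, your treatment of the cross-block terms is not correct. You write that the off-diagonal contributions ``sum cyclically to $0$'' and then appeal to Lemma~\ref{trrad}\eqref{trradglob} to make them vanish. But the cross terms $p_{i,j}^*p_{i',j}$ for $i\neq i'$ are not hermitian squares, so Lemma~\ref{trrad}\eqref{trradglob} does not apply to them. Moreover, it is \emph{not} true that every word of bidegree $(m-k,k)$ lies in some $V_i$; there is a genuine leftover set $U=W\setminus\bigcup_iV_i$, and each $p_j$ must be written as $\sum_ip_{i,j}+q_j$ with $q_j\in\Span_\R U$. The mechanism that separates the blocks is a combinatorial claim the paper proves explicitly: for $v_i\in V_i$ and $v_j\in V_j$ with $i\neq j$, the product $v_i^*v_j$ always contains a subword $YX^\ell Y$ or $XY^\ell X$ with $\ell$ odd, hence is \emph{not} cyclically equivalent to any word in $\{X^2,Y^2\}^m$. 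Since $S_{m,k}(X^2,Y^2)$ is supported entirely on cyclic classes of words in $\{X^2,Y^2\}^m$, Proposition~\ref{cyceqrem}\eqref{cyceqalg} then splits the relation $S_{m,k}(X^2,Y^2)\csim\sum_jp_j^*p_j$ into $S_{m,k}(X^2,Y^2)\csim\sum_{i,j}p_{i,j}^*p_{i,j}$ and $0\csim\sum_jr_j$, where $r_j$ collects the cross terms and the $q_j$ contributions. Nothing is ``discarded'' via Lemma~\ref{trrad}; the residual simply lives in disjoint cyclic classes. Your final paragraph on \eqref{smksos} is fine.
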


\begin{proof}
The second statement is clear since
$$\sum_i\bar v_i^*G_i\bar v_i=\sum_i\bar v_i^*C_i^*C_i\bar v_i=
  \sum_i(C_i\bar v_i)^*C_i\bar v_i=\sum_{i,j}p_{i,j}^*p_{i,j}.$$
  
We assume without loss of generality that $1\le k\le m-1$.
Suppose that $S_{m,k}(X^2,Y^2)\in\Th^2$, i.e.,
\begin{equation}\label{keyeq}
S_{m,k}(X^2,Y^2)\csim\sum_jp_j^*p_j
\end{equation}
for finitely many $0\neq p_j\in\R\axy$. Set $d:=\max_j\deg_Yp_j$ and let $P_j$
be the sum of all monomials of degree $d$ with respect to $Y$ appearing in
$p_j$.

Fix real symmetric matrices $A$ and $B$ of the same
size. For any real $\la$, we have
$\la^{2k}\tr(S_{m,k}(A^2,B^2))=\tr(\sum_jp_j(A,\la B)^*p_j(A,\la B))$.
We consider this as an equality of real polynomials in $\la$.

If we assume
$d>k$, then $\tr(\sum_jP_j(A,B)^*P_j(A,B))=0$ since the degree of the right
hand side polynomial cannot exceed the degree of the left hand side polynomial.
By \eqref{trradloc} of Lemma \ref{trrad}, we get $P_j(A,B)=0$ for all $j$.
Since $A$ and $B$ were arbitrary, this implies $P_j=0$ 
by Lemma \ref{trrad}\eqref{trradglob},
contradicting the choice of $d$.
Therefore all monomials appearing in $p_j$
have degree $\leq k$ in $Y$. By similar arguments, one shows that all
$p_j$ are actually homogeneous of degree $m-k$ in $X$ and homogeneous
of degree $k$ in $Y$, i.e., $p_j\in\Span_\R W$ where $W$ is the set of
all words of length $m$ with the letter $X$ appearing $m-k$ times and the
letter $Y$ appearing $k$ times.

\smallskip
{\it Claim.} Suppose we are in one of the cases (a)--(d) and $v_i\in V_i$
for each $i$. Then $v_i^*v_j\csim u$ for some $u\in\{X^2,Y^2\}^m$ if and only if $i=j$.

{\it Proof of claim.} The ``if'' part is immediate. To show the ``only if'' part, we
assume that $i\neq j$ and show that $v_i^*v_j$ contains
$YX^\ell Y$ or $XY^\ell X$ as a subword for some odd $\l$. Then the claim follows
by Proposition \ref{cyceqrem}\eqref{cyceqword}.

The existence of such a subword must
be checked case by case. As an example, consider (a). By symmetry arguments, it suffices
to look at $v_1^*v_2$ and $v_2^*v_3$. In the former case, the letter 
at position $m+1$ in $v_1^*v_2$ is an $X$ which is followed to the left
and right hand side by finitely many $X^2$. This block of $X$'s has odd length
and is embraced at both ends by a $Y$ since we have assumed $k\ge 1$.
In the latter case, there is an $X$ at the $m$-th and a $Y$ at the $(m+1)$-st
position in $v_2^*v_3$. This $Y$ is followed to the right hand side by finitely many $Y^2$ giving a
block of $Y$'s of odd length surrounded by $X$'s.

The other cases (b)--(d) are essentially the same, proving the claim.

\smallskip
Write each $p_j$ as $p_j=\sum_ip_{i,j}+q_j$ where $p_{i,j}\in\Span_\R V_i$
and $q_j\in\Span_\R U$ with $U:=W\setminus\bigcup_iV_i$. By the claim,
$p_j^*p_j=\sum_ip_{i,j}^*p_{i,j}+r_j$ where 
$\sum_ip_{i,j}^*p_{i,j}$ is
a linear combination of words that are cyclically equivalent to a word
in $\{X^2,Y^2\}^m$ and $r_j$ is in the linear span of
words not cyclically equivalent to a word
in $\{X^2,Y^2\}^m$. By part \eqref{cyceqalg} of Proposition \ref{cyceqrem},
it follows that \eqref{keyeq} can be split into
$$S_{m,k}(X^2,Y^2)\csim\sum_{i,j}p_{i,j}^*p_{i,j}\qquad
\text{and}\qquad 0\csim\sum_jr_j.$$

Now let $J$ be the index set consisting of all $j$ and define matrices
$C_i\in\R^{J\times V_i}$ by $[p_{i,j}]_{j\in J}=C_i\bar v_i$. Then the matrices
$G_i:=C_i^*C_i$ are positive semidefinite and satisfy \eqref{gram}.
\end{proof}

We illustrate the proposition by two examples.

\begin{example}\label{sos84}
We have $S_{8,4}(X^2,Y^2)\in\Th^2$. For instance, with
\begin{align*}
\bar v_1&=[Y^2  X^2  Y^2  X^2,\, Y^4  X^4,\, X^2  Y^4  X^2,\,Y^2  X^4  Y^2,\, X^4  Y^4,\, X^2  Y^2  X^2  Y^2]^t, \\
\bar v_2&=[X  Y^4  X^3,\, X  Y^2  X^2  Y^2  X,\, X^3  Y^4  X]^t, \\
\bar v_3&=[Y^3  X^4  Y,\, Y  X^2  Y^2  X^2  Y,\, Y  X^4  Y^3]^t
\end{align*}
and
$$
G_1=\begin{bmatrix}
4&4&0&3&1&1\\
4&4&0&3&1&1\\
0&0&3&0&3&3\\
3&3&0&3&0&0\\
1&1&3&0&4&4\\
1&1&3&0&4&4\\
\end{bmatrix}, \quad
G_2=G_3=\begin{bmatrix}
1&0&-1\\
0&0&0\\
-1&0&1\\
\end{bmatrix},
$$
$S_{8,4}(X^2,Y^2)\csim\sum_{i=1}^3\bar v_i^*G_i\bar v_i$. 
The matrices $G_i$ which we found using SDP are positive semidefinite
as can be seen from their characteristic polynomials
\begin{align*}
p_{G_1}&=-108 t^3 + 129 t^4 - 22 t^5 + t^6\in\R[t], \\
p_{G_2}=p_{G_3}&=2 t^2 - t^3\in\R[t].
\end{align*}
Alternatively, we can use the Cholesky
decompositions $G_i=C_i^*C_i$ for
$$
C_1=\frac 12 \begin{bmatrix}
4&4&0&3&1&1\\
0&0&2\sqrt 3&0&2 \sqrt 3&2 \sqrt 3\\
0&0&0&\sqrt 3&-\sqrt 3&-\sqrt 3\\
\end{bmatrix}, \quad
C_2=C_3=\begin{bmatrix}
1&0&-1\\
\end{bmatrix}.$$ 
\end{example}

A first nontrivial nonnegativity certificate of this type was found in an ad hoc fashion
by Hägele \cite{hag}, namely
\begin{equation}\label{sos73}
\begin{split}
S_{7,3}(X^2,Y^2)\csim\,&
7(Y^2X^4Y)^*(Y^2X^4Y)+ \\& 7(X^2Y^2X^2Y+X^4Y^3)^*(X^2Y^2X^2Y+X^4Y^3)\in\Si^2.
\end{split}
\end{equation}
This proves Conjecture \ref{bmv} for $m=7$ (since the cases $k\le 2$ and $m-k\le 2$ are trivial
and $S_{7,4}(X^2,Y^2)=S_{7,3}(Y^2,X^2)\in\Th^2$). Note that the representation \eqref{sos73}
uses only words from $V_1$ of Proposition \ref{wred}\eqref{GramOO}. Hägele also showed that there
is no such representation for $S_{6,3}(X^2,Y^2)$ using only words from $V_1$ of Proposition
\ref{wred}\eqref{GramEO}. However, he speculated that admitting more words might lead to such
a representation meaning in our setup that $S_{6,3}(X^2,Y^2)\in\Th^2$. Our next example proves
that this is not the case.

\begin{example}\label{non-sos}
We show that $S_{6,3}(X^2,Y^2)\not\in\Th^2$.
Suppose, by way of contradiction, that $S_{6,3}(X^2,Y^2)\in\Th^2$.
Then by Proposition \ref{wred}\eqref{GramEO}, with
the basis
$$
V=\{Y^3X^3,\, YX^2Y^2X,\, XY^2X^2Y,\, X^3Y^3\}
$$
we can find a positive semidefinite Gram matrix for $S_{6,3}(X^2,Y^2)$ that is block diagonal of the form
$$
G_{6,3}=\left[\begin{array}{cccc}
a_{11}&a_{12}&0&0\\
a_{12}&a_{22}&0&0\\
0&0&b_{11}&b_{12}\\
0&0&b_{12}&b_{22}
\end{array}\right]\in\R^{4\times 4}.
$$
With $\bar v=[v]_{v\in V}$, it follows from $S_{6,3}(X^2,Y^2)\csim\bar v^* G_{6,3}\bar v$ 
that 
$$
G_{6,3}=\left[\begin{array}{cccc}
a_{11}&a_{12}&0&0\\
a_{12}&a_{22}&0&0\\
0&0&2-a_{22}&6-a_{12}\\
0&0&6-a_{12}&6-a_{11}
\end{array}\right].
$$
For a positive semidefinite matrix of this form,
$0\leq a_{11}\leq 6$, $0\leq a_{22}\leq 2$,
\begin{eqnarray}
\label{eq63-1}
a_{12}^2 &\leq& a_{11}a_{22},\\
\label{eq63-2}
(6-a_{12})^2&\leq& (6-a_{11})(2-a_{22}).
\end{eqnarray}
By adding \eqref{eq63-1} and \eqref{eq63-2}, we obtain
\begin{equation*}
36-12a_{12}+2a_{12}^2\leq 12-2a_{11}-6a_{22}+2a_{11}a_{22}.
\end{equation*}
As $-2a_{11}-6a_{22}+2a_{11}a_{22}= a_{22}(a_{11}-6)+a_{11}(a_{22}-2)\leq 0$,
this implies
$$
0\ge a_{12}^2-6a_{12}+12=(a_{12}-3)^2+3,
$$
a contradiction. Hence $S_{6,3}(X^2,Y^2)\not\in \Th^2$.
\end{example}

\section{Strategy of the proof}\label{outline}

An important ingredient in the proof of Theorem \ref{bmv9} will be 
the following descent result of Hillar \cite[Theorem 1.7]{hil}:

\begin{theorem}[Hillar]
\label{hillar}
The failure of 
Conjecture {\rm \ref{bmv}} for a certain $(m,k)$ implies failure for all $(m',k')$ with
$m'-k'\ge m-k$ and $k'\ge k$.
\end{theorem}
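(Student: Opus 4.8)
The plan is to prove the contrapositive formulation directly: assuming $S_{m,k}(A,B)$ has negative trace for some positive semidefinite $A,B$, I would construct, for each $(m',k')$ with $m'-k'\ge m-k$ and $k'\ge k$, positive semidefinite matrices $A',B'$ witnessing $\tr(S_{m',k'}(A',B'))<0$. The natural first reduction is to handle the two "directions" separately: it suffices to show that failure at $(m,k)$ implies failure at $(m+1,k+1)$ (incrementing both degrees by one, keeping $m'-k'$ fixed) and failure at $(m+1,k)$ (incrementing only the $X$-degree). Iterating these two moves reaches every admissible $(m',k')$. So the core of the argument is two one-step lemmas.

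For the step $(m,k)\rightsquigarrow(m+1,k+1)$, the idea is a homogenization/scaling trick: consider the block matrices $\tilde A=\begin{bmatrix}A&0\\0&0\end{bmatrix}$ and a rank-one-ish perturbation, or more precisely pass to $(m+1)\times(m+1)$-type enlargements where one inserts an extra copy of $B$. A cleaner route: use the identity relating $\tr(S_{m+1,k+1}(A,B))$ to a derivative or a difference of traces $\tr((A+tB)^{m+1})$ evaluated with an auxiliary parameter, together with the observation that the coefficient of $t^{k+1}$ in $\tr((A+tB)^{m+1})$ can be forced to inherit the sign of the coefficient of $t^k$ in $\tr((A+tB)^m)$ by tensoring $B$ with a suitable positive semidefinite $2\times 2$ block or by replacing $A$ with $A\oplus(\lambda I)$ and letting $\lambda\to\infty$, which asymptotically isolates the desired coefficient. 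For the step $(m,k)\rightsquigarrow(m+1,k)$, one similarly tensors or pads with $A$; replacing $A$ by $A\oplus(\lambda I)$ and extracting the leading behavior in $\lambda$ turns a length-$m$ word-sum into a length-$(m+1)$ word-sum with the same number of $B$'s, up to a positive factor, again preserving negativity of the trace for $\lambda$ large.

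The key steps in order: (1) record the reduction to the two elementary moves; (2) prove the $X$-padding move $(m,k)\rightsquigarrow(m+1,k)$ by the $\lambda\to\infty$ asymptotic on $A\oplus\lambda I$, checking that the subleading coefficient extracted is exactly (a positive multiple of) $\tr(S_{m,k}(A,B))$; (3) prove the $(m,k)\rightsquigarrow(m+1,k+1)$ move by a parallel computation, padding $B$ (or both) and again isolating the relevant term in the large-parameter expansion; (4) assemble by induction. One should also double-check the trivial base inequalities $k\le m$, $k'\le m'$ and that the region $\{(m',k'):m'-k'\ge m-k,\ k'\ge k\}$ is indeed generated by the two unit moves from $(m,k)$.

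The hard part will be step (2)–(3): making the asymptotic extraction rigorous so that the sign genuinely transfers, i.e.\ showing the coefficient one isolates in the $\lambda$-expansion of $\tr((A\oplus\lambda I)+t(B\oplus 0))^{m'})$ (or the analogous padded expression) is a \emph{strictly positive} scalar multiple of $\tr(S_{m,k}(A,B))$ with no cancelling contributions from other words, and ensuring the padded matrices remain positive semidefinite. This is a bookkeeping argument on which words of length $m'$ survive the limit, and getting the combinatorics of that word-count exactly right — rather than just up to sign — is where the care is needed; the rest is routine matrix algebra and an induction.
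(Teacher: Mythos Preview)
Your reduction to the two elementary moves $(m,k)\to(m+1,k)$ and $(m,k)\to(m+1,k+1)$ is fine, but the proposed mechanism for those moves does not work. With $A'=A\oplus\lambda I$ and $B'=B\oplus 0$ the blocks never interact: $(A'+tB')^{m'}=(A+tB)^{m'}\oplus(\lambda I)^{m'}$, so for every $k'\ge 1$ one simply gets $\tr S_{m',k'}(A',B')=\tr S_{m',k'}(A,B)$, with no $\lambda$ present and nothing to extract asymptotically. Padding $B$ by a nonzero scalar block only adds the positive term $r\binom{m'}{k'}\lambda^{m'-k'}\mu^{k'}$ and pushes the trace the wrong way; tensoring $A'=A\otimes I,\ B'=B\otimes P$ yields $\tr S_{m,k}(A',B')=\tr(P^{k})\,\tr S_{m,k}(A,B)$, a rescaling that does not move $(m,k)$ at all. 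More fundamentally, the recursion $\tr S_{m+1,k}=\tr(A\,S_{m,k})+\tr(B\,S_{m,k-1})$ involves $\tr(A\,S_{m,k})$, which is \emph{not} controlled by $\tr S_{m,k}$ for generic $A,B$; there is no algebraic identity of the type your asymptotic extraction would need.

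The paper does not prove this theorem in full (it is quoted from Hillar), but its appendix supplies the key ingredient, and the method is variational rather than asymptotic. One fixes $n$ and minimises $\tr S_{m,k}(A^{2},B^{2})$ over symmetric $A,B$ on the unit Hilbert--Schmidt sphere; a minimiser can be taken positive semidefinite. The Euler--Lagrange equations (Lemma~\ref{lagrange}) then force $A\,S_{m-1,k}(A^{2},B^{2})=c_{1}A$ and $B\,S_{m-1,k-1}(A^{2},B^{2})=c_{2}B$ with $c_{1},c_{2}$ negative multiples of the assumed negative minimum. Combined with the word recursion this gives $S_{m,k}(A^{2},B^{2})=c_{1}A^{2}+c_{2}B^{2}$, which is negative \emph{semidefinite} at the minimiser, not merely of negative trace; then $\tr\bigl(A\,S_{m,k}(A^{2},B^{2})\,A\bigr)<0$ and $\tr\bigl(B\,S_{m,k}(A^{2},B^{2})\,B\bigr)<0$ deliver both elementary moves at once, with the \emph{same} witnesses $A,B$. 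The idea missing from your plan is exactly this upgrade from ``negative trace somewhere'' to ``negative semidefinite at an extremiser'', which is what makes the induction step go through.
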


In view of this theorem it suffices to prove Conjecture \ref{bmv}
for $(m,k)=(14,4)$ and $(m,k)=(14,6)$. To do this we apply our Gram matrix
method to prove that $S_{14,4}(X^2,Y^2)\in\Th^2$ and $S_{14,6}
(X^2,Y^2)\in\Th^2$.

Since the search for positive semidefinite Gram matrices is done by SDP,
the entries of the found matrices are only floating point numbers and
do not provide a sound proof for the existence of a certificate of
nonnegativity. However, in our case, there happen to exist
such Gram matrices with \emph{rational} entries and we have employed several
strategies and heuristics to find them.

First, we have detected symmetries and
patterns in the numerical solutions and imposed them
as additional constraints in subsequent SDPs. Second, we have worked with
different objective functions in order to find solutions with some
``nice'' rational entries that could be fixed. Finally, we have
employed rounding techniques involving heuristics to guess the prime 
factors appearing
in the denominators of the presumably rational entries. 
All too often, we have however lost
numerical stability and had to backtrack in this manually guided
refinement process.

For a systematic treatment of finding exact rational sum of squares certificates
for polynomials in \emph{commuting} variables we refer the reader to \cite{pp},
see also \cite{hi} and the references therein.

\section{Proof of Theorem {\rm\ref{bmv9}}}\label{export}

\noindent As mentioned above, 
it suffices to show that
$S_{14,4}(X^2,Y^2), S_{14,6}(X^2,Y^2)\in\Th^2$ (cf.~the table on page
\pageref{triangle} below).
Let
\begin{align*}
\bar v_{14,4}=&[Y^2X^{10}Y^2,\, X^4Y^2X^2Y^2X^4,\, X^6Y^4X^4,\, X^2Y^2X^6Y^2X^2,\,
X^4Y^2X^4Y^2X^2, \\
&\,\; X^8Y^4X^2+X^6Y^2X^2Y^2X^2,\, X^4Y^4X^6Y^2+X^2Y^2X^8Y^2,\, \\
&\,\; X^{10}Y^4+X^8Y^2X^2Y^2+X^6Y^2X^4Y^2]^t
\end{align*}
and
$$
G_{14,4}=
\left[
\begin{array}{rrrrrrrr}
        7&0&0&0&0&0&7&7 \\
        0&7&7&0&7&7&0&0 \\
        0&7&14&0&7&7&0&0 \\
        0&0&0&7&7&7&7&7 \\
        0&7&7&7&14&14&7&7 \\
        0&7&7&7&14&14&7&7 \\
        7&0&0&7&7&7&14&14 \\
        7&0&0&7&7&7&14&14
\end{array}\right].
$$
Then $S_{14,4}(X^2,Y^2)\csim\bar v_{14,4}^* G_{14,4}\bar v_{14,4}$.
The matrix $G_{14,4}$ is positive semidefinite
with Cholesky decomposition $G_{14,4}=L_{14,4}^* L_{14,4} $, where
$$
L_{14,4}=\sqrt 7 \left[\begin{array}{rrrrrrrr}
        {1}&0&0&0&0&0&{1}&{1} \\
        0&{1}&{1}&0&{1}&{1}&0&0 \\
        0&0&{1}&0&0&0&0&0 \\
        0&0&0&{1}&{1}&{1}&{1}&{1} \\
\end{array}\right].
$$

We now consider $S_{14,6}(X^2,Y^2)$. Let $A_{14,6}$ be the symmetric
$15\times 15$ matrix from page
\pageref{mat15} and
\begin{align*}
\bar u_{14,6}=&[Y^3  X^6  Y^2  X^2  Y,\, Y  X^2  Y^2  X^2 Y^2  X^4  Y,\, 
Y^3  X^4  Y^2  X^4  Y,\, Y  X^2  Y^4  X^6  Y,\, 
 \\ &\,\;
Y^3  X^2  Y^2  X^6  Y,\, Y^5  X^8  Y,\, Y  X^4  Y^4  X^4  Y,\, Y  X^2  Y^2  X^4  Y^2  X^2  Y,\, 
Y^3  X^8  Y^3,\,  \\ &\,\;
Y  X^8  Y^5,\,
Y  X^6  Y^2  X^2 Y^3,\, Y  X^6  Y^4  X^2  Y,\,Y  X^4  Y^2  X^4  Y^3,\, \\ &\,\; Y  X^4  Y^2  X^2  Y^2  X^2  Y,\, 
Y  X^2  Y^2  X^6  Y^3]^t.
\end{align*} From the matrices on pages \pageref{mat35a} and \pageref{mat35b} we form
a symmetric $35\times 35$ matrix $B_{14,6}$ as follows: The top left $18\times 19$ block is given by the matrix on page \pageref{mat35a}, the bottom left $17\times 19$ block is given on page \pageref{mat35b} and the other entries are obtained from
$$
[B_{14,6}]_{i,j}=[B_{14,6}]_{36-j,36-i} \quad \text{for}\quad i,j>19.
$$
Let 
\begin{align*}
\bar w_{14,6}=&[Y^2  X^2  Y^2  X^6  Y^2,\, Y^4  X^8  Y^2,\, Y^2 X^6  Y^4  X^2,\, Y^2  X^4  Y^2  X^2  Y^2  X^2,\, X^2  Y^4  X^4  Y^2  X^2,\, \\ &\,\; 
       Y^2  X^2  Y^2  X^4  Y^2  X^2,\, Y^4  X^6  Y^2 X^2,\,  X^2  Y^2  X^2  Y^4  X^4,\,Y^2  X^4  Y^4  X^4,\, \\ &\,\; 
        X^2  Y^4  X^2  Y^2  X^4,\, Y^2 X^2  Y^2  X^2  Y^2  X^4,\, Y^4  X^4  Y^2  X^4,\, X^2  Y^6  X^6,\, Y^2  X^2  Y^4  X^6,\, \\ &\,\; 
        Y^4 X^2  Y^2  X^6,\, Y^6  X^8,\, X^4  Y^6  X^4,\, X^2  Y^2  X^2  Y^2 X^2  Y^2  X^2,\, Y^2  X^4  Y^2  X^4  Y^2,\,\\ &\,\; 
        X^8  Y^6,\, X^6  Y^2 X^2  Y^4,\, X^6  Y^4  X^2  Y^2,\, X^6  Y^6  X^2,\, X^4  Y^2  X^4  Y^4,\, \\ &\,\; 
       X^4 Y^2  X^2  Y^2  X^2  Y^2,\, X^4  Y^2  X^2  Y^4  X^2,\, X^4  Y^4 X^4  Y^2,\, X^4  Y^4  X^2  Y^2  X^2,\, \\ &\,\; 
       X^2  Y^2  X^6  Y^4,\, X^2  Y^2  X^4 Y^2  X^2  Y^2,\, X^2  Y^2  X^4  Y^4  X^2,\, X^2  Y^2 X^2  Y^2  X^4  Y^2,\,\\ &\,\; 
        X^2  Y^4  X^6  Y^2,\,      Y^2  X^8  Y^4,\, Y^2  X^6  Y^2  X^2  Y^2]^t
\end{align*}
Then 
\begin{equation}\label{e14-6}
S_{14,6}(X^2,Y^2)\csim \bar u_{14,6}^* A_{14,6} \bar u_{14,6} + 
\bar w_{14,6}^* B_{14,6} \bar w_{14,6}.
\end{equation}
Both matrices $A_{14,6}$ and $B_{14,6}$ are positive semidefinite as is easily checked
by looking at the corresponding characteristic polynomials using symbolic computation. 
Hence $S_{14,6}(X^2,Y^2)\in\Th^2$. By Theorem \ref{hillar},
this proves the BMV conjecture
for $m\leq 13$.

\begin{remark}
The word vectors $\bar u_{14,6}$ and $\bar w_{14,6}$ as well as the matrices
on pages \pageref{mat15}, \pageref{mat35a} and \pageref{mat35b} can be found
in the Mathematica notebook that is available with the electronic version of
the source of this article:
\begin{center}
\url{http://arxiv.org/abs/0710.1074}
\end{center}
In the same
file we also provide code that verifies the nonnegativity certificate \eqref{e14-6}
when executed.
\end{remark}

{\small
\begin{landscape}
$$
\label{mat15}
\left[
\begin{array}{rrrrrrrrrrrrrrr}
	\tabfrac{737}{45}&-\tabfrac{13}{10}&\tabfrac{247}{90}&\tabfrac{497}{90}&\tabfrac{497}{90}&\tabfrac{497}{90}&-\tabfrac{8}{5}&\tabfrac{7}{2}&\tabfrac{122}{35}&\tabfrac{3}{19}&\tabfrac{3}{19}&\tabfrac{3}{19}&\tabfrac{746}{243}&\tabfrac{413}{180}&\tabfrac{199}{32}
\\ &&&&&&&&&&&&&&\\
	-\tabfrac{13}{10}&\tabfrac{7}{5}&-\tabfrac{7}{15}&-\tabfrac{19}{30}&-\tabfrac{19}{30}&-\tabfrac{19}{30}&\tabfrac{221}{162}&\tabfrac{17}{10}&-\tabfrac{17}{20}&\tabfrac{5}{4}&\tabfrac{5}{4}&\tabfrac{5}{4}&-\tabfrac{43}{81}&-\tabfrac{48}{49}&\tabfrac{413}{180}
\\ &&&&&&&&&&&&&&\\
	\tabfrac{247}{90}&-\tabfrac{7}{15}&3&0&0&0&-\tabfrac{31}{6}&-\tabfrac{392}{81}&\tabfrac{5377}{1215}&\tabfrac{175}{972}&\tabfrac{175}{972}&\tabfrac{175}{972}&\tabfrac{1437}{500}&-\tabfrac{43}{81}&\tabfrac{746}{243}
\\ &&&&&&&&&&&&&&\\
	\tabfrac{497}{90}&-\tabfrac{19}{30}&0&\tabfrac{7}{3}&\tabfrac{7}{3}&\tabfrac{7}{3}&\tabfrac{235}{247}&\tabfrac{227}{90}&-\tabfrac{13}{90}&-\tabfrac{1}{2}&-\tabfrac{1}{2}&-\tabfrac{1}{2}&\tabfrac{175}{972}&\tabfrac{5}{4}&\tabfrac{3}{19}
\\ &&&&&&&&&&&&&&\\
	\tabfrac{497}{90}&-\tabfrac{19}{30}&0&\tabfrac{7}{3}&\tabfrac{7}{3}&\tabfrac{7}{3}&\tabfrac{235}{247}&\tabfrac{227}{90}&-\tabfrac{13}{90}&-\tabfrac{1}{2}&-\tabfrac{1}{2}&-\tabfrac{1}{2}&\tabfrac{175}{972}&\tabfrac{5}{4}&\tabfrac{3}{19}
\\ &&&&&&&&&&&&&&\\
	\tabfrac{497}{90}&-\tabfrac{19}{30}&0&\tabfrac{7}{3}&\tabfrac{7}{3}&\tabfrac{7}{3}&\tabfrac{235}{247}&\tabfrac{227}{90}&-\tabfrac{13}{90}&-\tabfrac{1}{2}&-\tabfrac{1}{2}&-\tabfrac{1}{2}&\tabfrac{175}{972}&\tabfrac{5}{4}&\tabfrac{3}{19}
\\ &&&&&&&&&&&&&&\\
	-\tabfrac{8}{5}&\tabfrac{221}{162}&-\tabfrac{31}{6}&\tabfrac{235}{247}&\tabfrac{235}{247}&\tabfrac{235}{247}&\tabfrac{2251}{200}&\tabfrac{2251}{200}&-\tabfrac{18211}{2240}&\tabfrac{235}{247}&\tabfrac{235}{247}&\tabfrac{235}{247}&-\tabfrac{31}{6}&\tabfrac{221}{162}&-\tabfrac{8}{5}
\\ &&&&&&&&&&&&&&\\
	\tabfrac{7}{2}&\tabfrac{17}{10}&-\tabfrac{392}{81}&\tabfrac{227}{90}&\tabfrac{227}{90}&\tabfrac{227}{90}&\tabfrac{2251}{200}&\tabfrac{3902}{225}&-\tabfrac{373}{45}&\tabfrac{227}{90}&\tabfrac{227}{90}&\tabfrac{227}{90}&-\tabfrac{392}{81}&\tabfrac{17}{10}&\tabfrac{7}{2}
\\ &&&&&&&&&&&&&&\\
	\tabfrac{122}{35}&-\tabfrac{17}{20}&\tabfrac{5377}{1215}&-\tabfrac{13}{90}&-\tabfrac{13}{90}&-\tabfrac{13}{90}&-\tabfrac{18211}{2240}&-\tabfrac{373}{45}&\tabfrac{712}{105}&-\tabfrac{13}{90}&-\tabfrac{13}{90}&-\tabfrac{13}{90}&\tabfrac{5377}{1215}&-\tabfrac{17}{20}&\tabfrac{122}{35}
\\ &&&&&&&&&&&&&&\\
	\tabfrac{3}{19}&\tabfrac{5}{4}&\tabfrac{175}{972}&-\tabfrac{1}{2}&-\tabfrac{1}{2}&-\tabfrac{1}{2}&\tabfrac{235}{247}&\tabfrac{227}{90}&-\tabfrac{13}{90}&\tabfrac{7}{3}&\tabfrac{7}{3}&\tabfrac{7}{3}&0&-\tabfrac{19}{30}&\tabfrac{497}{90}
\\ &&&&&&&&&&&&&&\\
	\tabfrac{3}{19}&\tabfrac{5}{4}&\tabfrac{175}{972}&-\tabfrac{1}{2}&-\tabfrac{1}{2}&-\tabfrac{1}{2}&\tabfrac{235}{247}&\tabfrac{227}{90}&-\tabfrac{13}{90}&\tabfrac{7}{3}&\tabfrac{7}{3}&\tabfrac{7}{3}&0&-\tabfrac{19}{30}&\tabfrac{497}{90}
\\ &&&&&&&&&&&&&&\\
	\tabfrac{3}{19}&\tabfrac{5}{4}&\tabfrac{175}{972}&-\tabfrac{1}{2}&-\tabfrac{1}{2}&-\tabfrac{1}{2}&\tabfrac{235}{247}&\tabfrac{227}{90}&-\tabfrac{13}{90}&\tabfrac{7}{3}&\tabfrac{7}{3}&\tabfrac{7}{3}&0&-\tabfrac{19}{30}&\tabfrac{497}{90}
\\ &&&&&&&&&&&&&&\\
	\tabfrac{746}{243}&-\tabfrac{43}{81}&\tabfrac{1437}{500}&\tabfrac{175}{972}&\tabfrac{175}{972}&\tabfrac{175}{972}&-\tabfrac{31}{6}&-\tabfrac{392}{81}&\tabfrac{5377}{1215}&0&0&0&3&-\tabfrac{7}{15}&\tabfrac{247}{90}
\\ &&&&&&&&&&&&&&\\
	\tabfrac{413}{180}&-\tabfrac{48}{49}&-\tabfrac{43}{81}&\tabfrac{5}{4}&\tabfrac{5}{4}&\tabfrac{5}{4}&\tabfrac{221}{162}&\tabfrac{17}{10}&-\tabfrac{17}{20}&-\tabfrac{19}{30}&-\tabfrac{19}{30}&-\tabfrac{19}{30}&-\tabfrac{7}{15}&\tabfrac{7}{5}&-\tabfrac{13}{10}
\\ &&&&&&&&&&&&&&\\
	\tabfrac{199}{32}&\tabfrac{413}{180}&\tabfrac{746}{243}&\tabfrac{3}{19}&\tabfrac{3}{19}&\tabfrac{3}{19}&-\tabfrac{8}{5}&\tabfrac{7}{2}&\tabfrac{122}{35}&\tabfrac{497}{90}&\tabfrac{497}{90}&\tabfrac{497}{90}&\tabfrac{247}{90}&-\tabfrac{13}{10}&\tabfrac{737}{45}
\end{array}
\right]
$$
\end{landscape}
}

{\small
\renewcommand\arraystretch{1.7}%
\begin{landscape}
$$
\label{mat35a}
\left[
\begin{array}{rrrrrrrrrrrrrrrrrrrrrrrrrrrrrrrrrrrrrrrrrrrrrrrrrrr}
	9&9&5&7&0&\tabfrac{7}{3}&\tabfrac{7}{3}&0&\tabfrac{11}{3}&0&\tabfrac{7}{3}&\tabfrac{7}{3}&0&\tabfrac{7}{3}&\tabfrac{7}{3}&\tabfrac{7}{3}&0&0&\tabfrac{5}{2} \\
	9&9&5&7&0&\tabfrac{7}{3}&\tabfrac{7}{3}&0&\tabfrac{11}{3}&0&\tabfrac{7}{3}&\tabfrac{7}{3}&0&\tabfrac{7}{3}&\tabfrac{7}{3}&\tabfrac{7}{3}&0&0&\tabfrac{5}{2} \\
	5&5&28&\tabfrac{7}{2}&5&7&7&\tabfrac{16}{19}&-\tabfrac{21}{2}&\tabfrac{16}{19}&7&7&\tabfrac{16}{19}&7&7&7&2&4&\tabfrac{13}{3} \\
	7&7&\tabfrac{7}{2}&\tabfrac{6349}{200}&14&7&7&8&11&\tabfrac{373}{90}&7&7&\tabfrac{373}{90}&7&7&7&\tabfrac{22}{9}&2&\tabfrac{23}{2} \\
	0&0&5&14&25&\tabfrac{7}{2}&\tabfrac{7}{2}&\tabfrac{1066}{81}&\tabfrac{7}{2}&\tabfrac{3494}{741}&\tabfrac{7}{2}&\tabfrac{7}{2}&\tabfrac{3494}{741}&\tabfrac{7}{2}&\tabfrac{7}{2}&\tabfrac{7}{2}&\tabfrac{85}{27}&7&0
\\
	\tabfrac{7}{3}&\tabfrac{7}{3}&7&7&\tabfrac{7}{2}&7&7&\tabfrac{7}{2}&7&\tabfrac{7}{2}&7&7&\tabfrac{7}{2}&7&7&7&1&\tabfrac{7}{2}&\tabfrac{14}{3} \\
	\tabfrac{7}{3}&\tabfrac{7}{3}&7&7&\tabfrac{7}{2}&7&7&\tabfrac{7}{2}&7&\tabfrac{7}{2}&7&7&\tabfrac{7}{2}&7&7&7&1&\tabfrac{7}{2}&\tabfrac{14}{3} \\
	0&0&\tabfrac{16}{19}&8&\tabfrac{1066}{81}&\tabfrac{7}{2}&\tabfrac{7}{2}&21&10&8&\tabfrac{7}{2}&\tabfrac{7}{2}&8&\tabfrac{7}{2}&\tabfrac{7}{2}&\tabfrac{7}{2}&18&\tabfrac{3}{2}&-\tabfrac{5}{2}
\\
	\tabfrac{11}{3}&\tabfrac{11}{3}&-\tabfrac{21}{2}&11&\tabfrac{7}{2}&7&7&10&28&6&7&7&6&7&7&7&6&4&-\tabfrac{1}{2} \\
	0&0&\tabfrac{16}{19}&\tabfrac{373}{90}&\tabfrac{3494}{741}&\tabfrac{7}{2}&\tabfrac{7}{2}&8&6&5&\tabfrac{7}{2}&\tabfrac{7}{2}&5&\tabfrac{7}{2}&\tabfrac{7}{2}&\tabfrac{7}{2}&\tabfrac{11}{2}&-\tabfrac{1}{4}&0
\\
	\tabfrac{7}{3}&\tabfrac{7}{3}&7&7&\tabfrac{7}{2}&7&7&\tabfrac{7}{2}&7&\tabfrac{7}{2}&7&7&\tabfrac{7}{2}&7&7&7&1&\tabfrac{7}{2}&\tabfrac{14}{3} \\
	\tabfrac{7}{3}&\tabfrac{7}{3}&7&7&\tabfrac{7}{2}&7&7&\tabfrac{7}{2}&7&\tabfrac{7}{2}&7&7&\tabfrac{7}{2}&7&7&7&1&\tabfrac{7}{2}&\tabfrac{14}{3} \\
	0&0&\tabfrac{16}{19}&\tabfrac{373}{90}&\tabfrac{3494}{741}&\tabfrac{7}{2}&\tabfrac{7}{2}&8&6&5&\tabfrac{7}{2}&\tabfrac{7}{2}&5&\tabfrac{7}{2}&\tabfrac{7}{2}&\tabfrac{7}{2}&\tabfrac{11}{2}&-\tabfrac{1}{4}&0
\\
	\tabfrac{7}{3}&\tabfrac{7}{3}&7&7&\tabfrac{7}{2}&7&7&\tabfrac{7}{2}&7&\tabfrac{7}{2}&7&7&\tabfrac{7}{2}&7&7&7&1&\tabfrac{7}{2}&\tabfrac{14}{3} \\
	\tabfrac{7}{3}&\tabfrac{7}{3}&7&7&\tabfrac{7}{2}&7&7&\tabfrac{7}{2}&7&\tabfrac{7}{2}&7&7&\tabfrac{7}{2}&7&7&7&1&\tabfrac{7}{2}&\tabfrac{14}{3} \\
	\tabfrac{7}{3}&\tabfrac{7}{3}&7&7&\tabfrac{7}{2}&7&7&\tabfrac{7}{2}&7&\tabfrac{7}{2}&7&7&\tabfrac{7}{2}&7&7&7&1&\tabfrac{7}{2}&\tabfrac{14}{3} \\
	0&0&2&\tabfrac{22}{9}&\tabfrac{85}{27}&1&1&18&6&\tabfrac{11}{2}&1&1&\tabfrac{11}{2}&1&1&1&\tabfrac{7396}{315}&-\tabfrac{11}{3}&-\tabfrac{16}{3} \\
	0&0&4&2&7&\tabfrac{7}{2}&\tabfrac{7}{2}&\tabfrac{3}{2}&4&-\tabfrac{1}{4}&\tabfrac{7}{2}&\tabfrac{7}{2}&-\tabfrac{1}{4}&\tabfrac{7}{2}&\tabfrac{7}{2}&\tabfrac{7}{2}&-\tabfrac{11}{3}&\tabfrac{52}{3}&-5
\end{array}
\right]
$$
\end{landscape}
}
{\tiny
\renewcommand\arraystretch{2.5}%
\begin{landscape}
$$
\label{mat35b}
\left[
\begin{array}{rrrrrrrrrrrrrrrrrrrrrrrrrrrrrrrrrrrrrrrrrrrrrrrrrrr}
	\tabfrac{5}{2}&\tabfrac{5}{2}&\tabfrac{13}{3}&\tabfrac{23}{2}&0&\tabfrac{14}{3}&\tabfrac{14}{3}&-\tabfrac{5}{2}&-\tabfrac{1}{2}&0&\tabfrac{14}{3}&\tabfrac{14}{3}&0&\tabfrac{14}{3}&\tabfrac{14}{3}&\tabfrac{14}{3}&-\tabfrac{16}{3}&-5&28
\\
	-\tabfrac{77}{90}&-\tabfrac{77}{90}&-\tabfrac{7}{6}&\tabfrac{9}{5}&1&-2&-2&1&-\tabfrac{10}{3}&-2&-2&-2&-2&-2&-2&-2&1&\tabfrac{7}{2}&\tabfrac{14}{3} \\
	-\tabfrac{77}{90}&-\tabfrac{77}{90}&-\tabfrac{7}{6}&\tabfrac{9}{5}&1&-2&-2&1&-\tabfrac{10}{3}&-2&-2&-2&-2&-2&-2&-2&1&\tabfrac{7}{2}&\tabfrac{14}{3} \\
	-\tabfrac{77}{90}&-\tabfrac{77}{90}&-\tabfrac{7}{6}&\tabfrac{9}{5}&1&-2&-2&1&-\tabfrac{10}{3}&-2&-2&-2&-2&-2&-2&-2&1&\tabfrac{7}{2}&\tabfrac{14}{3} \\
	0&0&-1&-1&-\tabfrac{13}{4}&-2&-2&1&-\tabfrac{31}{27}&-2&-2&-2&-2&-2&-2&-2&\tabfrac{11}{2}&-\tabfrac{1}{4}&0 \\
	-\tabfrac{77}{90}&-\tabfrac{77}{90}&-\tabfrac{7}{6}&\tabfrac{9}{5}&1&-2&-2&1&-\tabfrac{10}{3}&-2&-2&-2&-2&-2&-2&-2&1&\tabfrac{7}{2}&\tabfrac{14}{3} \\
	-\tabfrac{77}{90}&-\tabfrac{77}{90}&-\tabfrac{7}{6}&\tabfrac{9}{5}&1&-2&-2&1&-\tabfrac{10}{3}&-2&-2&-2&-2&-2&-2&-2&1&\tabfrac{7}{2}&\tabfrac{14}{3} \\
	0&0&-1&-1&-\tabfrac{13}{4}&-2&-2&1&-\tabfrac{31}{27}&-2&-2&-2&-2&-2&-2&-2&\tabfrac{11}{2}&-\tabfrac{1}{4}&0 \\
	-\tabfrac{28829}{4480}&-\tabfrac{28829}{4480}&-\tabfrac{55591}{20007}&-8&0&-\tabfrac{10}{3}&-\tabfrac{10}{3}&\tabfrac{7}{2}&-\tabfrac{757}{81}&-\tabfrac{31}{27}&-\tabfrac{10}{3}&-\tabfrac{10}{3}&-\tabfrac{31}{27}&-\tabfrac{10}{3}&-\tabfrac{10}{3}&-\tabfrac{10}{3}&6&4&-\tabfrac{1}{2}
\\
	0&0&\tabfrac{9}{2}&-\tabfrac{229}{81}&-\tabfrac{1327}{972}&1&1&\tabfrac{109987}{10080}&\tabfrac{7}{2}&1&1&1&1&1&1&1&18&\tabfrac{3}{2}&-\tabfrac{5}{2} \\
	-\tabfrac{77}{90}&-\tabfrac{77}{90}&-\tabfrac{7}{6}&\tabfrac{9}{5}&1&-2&-2&1&-\tabfrac{10}{3}&-2&-2&-2&-2&-2&-2&-2&1&\tabfrac{7}{2}&\tabfrac{14}{3} \\
	-\tabfrac{77}{90}&-\tabfrac{77}{90}&-\tabfrac{7}{6}&\tabfrac{9}{5}&1&-2&-2&1&-\tabfrac{10}{3}&-2&-2&-2&-2&-2&-2&-2&1&\tabfrac{7}{2}&\tabfrac{14}{3} \\
	0&0&\tabfrac{99031}{13440}&-\tabfrac{44}{3}&-\tabfrac{1240243}{162000}&1&1&-\tabfrac{1327}{972}&0&-\tabfrac{13}{4}&1&1&-\tabfrac{13}{4}&1&1&1&\tabfrac{85}{27}&7&0 \\
	-\tabfrac{413}{180}&-\tabfrac{413}{180}&\tabfrac{1369}{180}&-\tabfrac{195323}{22050}&-\tabfrac{44}{3}&\tabfrac{9}{5}&\tabfrac{9}{5}&-\tabfrac{229}{81}&-8&-1&\tabfrac{9}{5}&\tabfrac{9}{5}&-1&\tabfrac{9}{5}&\tabfrac{9}{5}&\tabfrac{9}{5}&\tabfrac{22}{9}&2&\tabfrac{23}{2}
\\
	1&1&6&\tabfrac{1369}{180}&\tabfrac{99031}{13440}&-\tabfrac{7}{6}&-\tabfrac{7}{6}&\tabfrac{9}{2}&-\tabfrac{55591}{20007}&-1&-\tabfrac{7}{6}&-\tabfrac{7}{6}&-1&-\tabfrac{7}{6}&-\tabfrac{7}{6}&-\tabfrac{7}{6}&2&4&\tabfrac{13}{3}
\\
	-\tabfrac{2246}{315}&-\tabfrac{2246}{315}&1&-\tabfrac{413}{180}&0&-\tabfrac{77}{90}&-\tabfrac{77}{90}&0&-\tabfrac{28829}{4480}&0&-\tabfrac{77}{90}&-\tabfrac{77}{90}&0&-\tabfrac{77}{90}&-\tabfrac{77}{90}&-\tabfrac{77}{90}&0&0&\tabfrac{5}{2}
\\
	-\tabfrac{2246}{315}&-\tabfrac{2246}{315}&1&-\tabfrac{413}{180}&0&-\tabfrac{77}{90}&-\tabfrac{77}{90}&0&-\tabfrac{28829}{4480}&0&-\tabfrac{77}{90}&-\tabfrac{77}{90}&0&-\tabfrac{77}{90}&-\tabfrac{77}{90}&-\tabfrac{77}{90}&0&0&\tabfrac{5}{2}
\end{array}
\right]
$$
\end{landscape}
}
\renewcommand\arraystretch{1.1}

\section{Concluding remarks}\label{conclude}

\subsection{Current state of the BMV conjecture}

The following table shows the examples we have computed on an 
ordinary PC running Mathematica with the NCAlgebra package \cite{hms}, Yalmip \cite{lof} and the SDP solver SeDuMi \cite{stu}. Most of the computations took a few seconds, some of them a few minutes.

\begin{center}
\begin{tabular}{lc}
&
\newsavebox{\foo}
\savebox{\foo}{\parbox{0.68in}{$k=m-1$ $k=m$ \textcolor{white}{$k=m$}
}}%
\begin{turn}{117}\usebox{\foo}\end{turn}

\newsavebox{\fooo}
\savebox{\fooo}{\parbox{0.4in}{$k=0$ $k=1$ $k=2$
}}%
\begin{turn}{63}\usebox{\fooo}\end{turn}
\\
$\;m$\;\vline\\
\hline
\hfill 0\;\vline& \textcolor{light-gray}{  $+$ } \\
\hfill 1\;\vline& \textcolor{light-gray}{  $+$  $+$ } \\
\hfill 2\;\vline& \textcolor{light-gray}{  $+$  $+$  $+$ } \\
\hfill 3\;\vline& \textcolor{light-gray}{  $+$  $+$  $+$  $+$ } \\
\hfill 4\;\vline& \textcolor{light-gray}{  $+$  $+$  $+$  $+$  $+$}  \\
\hfill 5\;\vline& \textcolor{light-gray}{  $+$  $+$  $+$  $+$  $+$  $+$}  \\
\hfill 6\;\vline& \textcolor{light-gray}{  $+$  $+$  $+$} $\ominus$  \textcolor{light-gray}{ $+$  $+$  $+$  }\\
\hfill 7\;\vline& \textcolor{light-gray}{  $+$  $+$  $+$}  $\oplus$  $\oplus$  \textcolor{light-gray}{  $+$  $+$  $+$ } \\
\hfill 8\;\vline& \textcolor{light-gray}{  $+$  $+$  $+$} $-$ $\oplus$ $-$ \textcolor{light-gray}{  $+$  $+$  $+$  }\\
\hfill 9\;\vline& \textcolor{light-gray}{  $+$  $+$  $+$} $-$ $\oplus$  $\oplus$ $-$  \textcolor{light-gray}{ $+$  $+$  $+$}  \\
\hfill 10\;\vline&\textcolor{light-gray}{  $+$  $+$  $+$} $-$ $\oplus$ $-$ $\oplus$ $-$ \textcolor{light-gray}{  $+$  $+$  $+$  }\\
\hfill 11\;\vline&\textcolor{light-gray}{  $+$  $+$  $+$}  $+$  $+$ $-$ $-$  $+$  $+$  \textcolor{light-gray}{  $+$  $+$  $+$  }\\
\hfill 12\;\vline&\textcolor{light-gray}{  $+$  $+$  $+$} $-$ $+$ $-$ $-$ $-$  $+$ $-$  \textcolor{light-gray}{ $+$  $+$  $+$  }\\
\hfill 13\;\vline&\textcolor{light-gray}{  $+$  $+$  $+$} $-$ $+$ $-$ $-$ $-$ $-$ $+$ $-$ \textcolor{light-gray}{  $+$  $+$  $+$  }\\
\hfill 14\;\vline&\textcolor{light-gray}{  $+$  $+$  $+$} $-$ $\oplus$ $-$ $\oplus$ $-$ $\oplus$ $-$ $\oplus$ $-$  \textcolor{light-gray}{ $+$  $+$  $+$  }\\
\hfill 15\;\vline&\textcolor{light-gray}{  $+$  $+$  $+$} $-$ $+$ $-$ $-$ $-$  $-$ $-$ $-$  $+$ $-$ \textcolor{light-gray}{  $+$  $+$  $+$  }\\
\hfill 16\;\vline&\textcolor{light-gray}{  $+$  $+$  $+$} $-$ $+$ $-$ $-$ $-$ $-$ $-$ $-$ $-$ $+$ $-$ \textcolor{light-gray}{  $+$  $+$  $+$  }\\
\hfill 17\;\vline&\textcolor{light-gray}{  $+$  $+$  $+$} $-$ $+$ $-$ $-$ $-$  $-$ $-$ $-$ $-$  $-$  $+$ $-$  \textcolor{light-gray}{ $+$  $+$  $+$  }\\
\hfill 18\;\vline&\textcolor{light-gray}{  $+$  $+$  $+$} $-$ $+$ $-$ $-$ $-$ ${}\mathbin ?{}$ ${}\mathbin ?{}$ ${}\mathbin ?{}$ $-$ $-$ $-$ $+$ $-$ \textcolor{light-gray}{  $+$  $+$  $+$  }\\
\hfill 19\;\vline&\textcolor{light-gray}{ $+$  $+$  $+$} $-$ $+$ $-$ $-$ $-$  ${}\mathbin ?{}$ ${}\mathbin ?{}$ ${}\mathbin ?{}$ ${}\mathbin ?{}$ $-$ $-$ $-$ $+$ $-$ \textcolor{light-gray}{  $+$  $+$  $+$  }\\
\end{tabular}\label{triangle}
\end{center}
\vskip 0.15cm
\centerline{\small Is $S_{m,k}(X^2,Y^2)\in\Th^2$?}

\bigskip
\begin{center}
\begin{tabular}
{c|l}
symbol & \quad meaning \\
\hline
\textcolor{light-gray}{$+$} & $S_{m,k}$ is in $\Th^2$ for trivial reasons \\
$\oplus$ & $S_{m,k}$ is in $\Th^2$ (with proof) \\
$+$ & $S_{m,k}$ is in $\Th^2$ (numerical evidence) \\
$\ominus$ & $S_{m,k}$ is not in $\Th^2$ (with proof) \\
$-$ & $S_{m,k}$ is not in $\Th^2$ (numerical evidence)
\end{tabular}
\end{center}
\vskip 0.05cm
\centerline{\small Legend}

\bigskip
While finishing our paper, Landweber and Speer sent us a closely
related preprint \cite{ls2}  where they prove for example that
$S_{m,4}(X^2,Y^2)\in\Th^2$ for odd $m$ and that $S_{11,3}(X^2,Y^2)\in\Th^2$.
Their certificates only use words from $V_1$ of Proposition \ref{wred}.
They also give results on the negative side, which imply by
Proposition \ref{speer} below that
$S_{m,k}(X^2,Y^2)\not\in\Th^2$ in the following cases:
\begin{enumerate}
\item $m$ is odd and $5\leq k\leq m-5$; 
\item $m\geq 13$ is odd and $k=3$; 
\item $m$ is even, $k$ is odd and 
$3\leq k\leq m-3$;
\item $(m,k)=(9,3)$. 
\end{enumerate}
The compatibility between our setup and the setup of Landweber and
Speer \cite{ls2} is
provided by the following proposition
communicated to us by Eugene Speer. We thank him
for letting us include this result.

\begin{proposition}\label{speer}
Retain the notation from Proposition {\rm \ref{wred}} and assume that
$m$ or $k$ is odd. Then $S_{m,k}(X^2,Y^2)\in\Th^2$ if and only if
$S_{m,k}(X^2,Y^2)\csim\bar v_1^*G_1\bar v_1$ for some positive
semidefinite $G_1$ $($or equivalently, 
if and only if 
$S_{m,k}(X^2,Y^2)\csim\bar v_2^*G_2\bar v_2$ for some positive
semidefinite $G_2$$)$.
\end{proposition}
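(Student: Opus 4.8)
The plan is to exploit the extra rigidity that comes from $m$ or $k$ being odd, namely the symmetry $X\leftrightarrow Y$ combined with the structure of the word sets $V_i$. First I would recall from Proposition \ref{wred} that $S_{m,k}(X^2,Y^2)\in\Th^2$ is equivalent to the existence of positive semidefinite $G_i\in\sym\R^{V_i\times V_i}$ with $S_{m,k}(X^2,Y^2)\csim\sum_i\bar v_i^*G_i\bar v_i$. In the cases where $m$ or $k$ is odd (cases \eqref{GramOE}, \eqref{GramOO}, \eqref{GramEO} of Proposition \ref{wred}) there are exactly two blocks, $V_1$ and $V_2$. The key observation I would establish is that there is a length-preserving bijection $\iota\colon V_1\to V_2$, obtained by reversing words (applying the involution $v\mapsto v^*$): in case \eqref{GramOE}, reversing a word in $X\{X^2,Y^2\}^{(m-1)/2}$ yields a word in $\{X^2,Y^2\}^{(m-1)/2}X$ with the same $X$- and $Y$-degrees, and similarly in \eqref{GramOO}; in case \eqref{GramEO} reversing $X\{X^2,Y^2\}^{m/2-1}Y$ gives $Y\{X^2,Y^2\}^{m/2-1}X$. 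Hence $\bar v_2$ is (up to reindexing) $\bar v_1^*$ read as a column, i.e. $\bar v_2 = P\,\overline{v_1^*}$ for a permutation $P$, and for any word $v\in V_1$ we have $v^*v \csim v v^*$ with $vv^*$ built from $\iota(v)$; more precisely $\iota(v)^*\iota(v) = (v^*)^*v^* = vv^* \csim v^*v$.

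With this in hand the argument is a symmetrization. Suppose $S_{m,k}(X^2,Y^2)\csim\bar v_1^*G_1\bar v_1+\bar v_2^*G_2\bar v_2$ with $G_1,G_2\succeq 0$. Using the bijection $\iota$ I would rewrite $\bar v_2^*G_2\bar v_2$ in terms of $\bar v_1$: since each entry of $\bar v_2$ is the reverse of the corresponding entry of $\bar v_1$, cyclic equivalence gives $\bar v_2^*G_2\bar v_2\csim \bar v_1^* G_2' \bar v_1$ where $G_2'$ is $G_2$ conjugated by the permutation matrix coming from $\iota$, hence still positive semidefinite. Here one must check that the cross terms $(\bar v_2)_a^*(\bar v_2)_b = w_a^* w_b$ (with $w_a=v_a^*$) are cyclically equivalent to $(\bar v_1)_{?}^*(\bar v_1)_{?}$-type words lying among $\{X^2,Y^2\}^m$ in the same way — this is exactly the bookkeeping governed by the Claim inside the proof of Proposition \ref{wred}, which tells us which products are cyclically equivalent to words in $\{X^2,Y^2\}^m$. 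Then $S_{m,k}(X^2,Y^2)\csim\bar v_1^*(G_1+G_2')\bar v_1$ with $G_1+G_2'\succeq 0$, giving one direction; the statement about $V_2$ follows by the symmetric computation, or simply by applying the $V_1$-statement to $S_{m,k}(Y^2,X^2)=S_{m,k}(X^2,Y^2)$ after swapping $X$ and $Y$, which interchanges $V_1$ and $V_2$.

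The converse direction (a representation using only $\bar v_1$ yields one in $\Th^2$) is immediate from Proposition \ref{wred} by taking $G_2=0$. The main obstacle I anticipate is the combinatorial verification that reversal really does map $V_1$ bijectively onto $V_2$ in each of the three odd cases and, more delicately, that under this reversal the quadratic form $\bar v_2^*G_2\bar v_2$ transforms to $\bar v_1^*G_2'\bar v_1$ \emph{modulo cyclic equivalence} — i.e. that the pattern of which index pairs $(a,b)$ contribute to a given word in $\{X^2,Y^2\}^m$ is preserved. Concretely one needs: for $v_a,v_b\in V_1$, $v_a^* v_b\csim u\in\{X^2,Y^2\}^m$ iff $\iota(v_a)^*\iota(v_b)\csim u$, which should follow because $\iota(v_a)^*\iota(v_b) = v_a v_b^* = (v_b v_a^*)^{*}$ and $v_b v_a^*\csim v_a^* v_b$ by Proposition \ref{cyceqrem}\eqref{cyceqword}, so the two products are cyclically equivalent to each other and in particular to the same words. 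Once this is nailed down the rest is routine linear algebra with permutation matrices and the convex-cone properties of positive semidefiniteness.
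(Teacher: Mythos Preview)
Your approach is correct and essentially identical to the paper's: both use the bijection $V_1\to V_2$, $v\mapsto v^*$, together with cyclic equivalence and the symmetry of the Gram matrix, to transfer the $V_2$-block into a $V_1$-block (and vice versa).

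One small imprecision in your final check is worth flagging. You claim that $\iota(v_a)^*\iota(v_b)\csim u$ iff $v_a^*v_b\csim u$ for the \emph{same} $u\in\{X^2,Y^2\}^m$, but your chain of reasoning only gives $(v_a^*v_b)^*\csim \iota(v_a)^*\iota(v_b)$, and a word in $\{X^2,Y^2\}^m$ is not in general cyclically equivalent to its reverse. Fortunately this stronger statement is not needed. The cleaner route (and exactly what the paper does) is:
\[
\iota(v_a)^*\iota(v_b)=v_a v_b^*\csim v_b^* v_a,
\]
so that
\[
\bar v_2^*G_2\bar v_2\csim\sum_{a,b}v_b^*(G_2')_{ab}v_a=\bar v_1^*(G_2')^T\bar v_1=\bar v_1^*G_2'\bar v_1,
\]
the index swap being absorbed by the transpose since $G_2'$ is symmetric. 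No term-by-term matching with a fixed $u$ is required; once you make this adjustment your argument coincides with the paper's.
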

\begin{proof}
One direction is trivial and for the converse suppose that
$S_{m,k}(X^2,Y^2)\in\Th^2$. Then by Proposition \ref{wred},
$S_{m,k}(X^2,Y^2)\csim\sum_{i=1}^2 \bar v_i^*G_i\bar v_i$
for some positive semidefinite $G_1$, $G_2$. 
Note that $w\in V_1$ if and only if $w^*\in V_2$. Hence,
$$
\bar v_1^*G_1\bar v_1= \sum_{v,u\in V_1} v^* (G_1)_{vu}u=
\sum_{w,z\in V_2} w (G_1')_{wz}z^*\csim 
\sum_{w,z\in V_2} z^* (G_1')_{wz}w=\bar v_2^*G_1'\bar v_2,
$$
where $G_1'$ is a positive semidefinite matrix obtained from 
$G_1$ by a relabelling of rows and columns.
Thus
$$
S_{m,k}(X^2,Y^2)\csim\sum_{i=1}^2 \bar v_i^*G_i\bar v_i
\csim\bar v_2^*(G_1'+G_2)\bar v_2
$$
and similarly $S_{m,k}(X^2,Y^2)\csim 
\bar v_1^*(G_1+G_2')\bar v_1$.
\end{proof}

Independently of the work of Landweber and Speer, the doctoral student
Burgdorf \cite{bur}, initially guided by further numerical experiments, found a
combinatorial proof of $S_{m,4}(X^2,Y^2)\in\Th^2$ for all $m$.

To summarize, the table on page \pageref{triangle} can be updated as follows:

\begin{center}
\begin{tabular}{lc}
\hfill 8\;\vline& \textcolor{light-gray}{  $+$  $+$  $+$} $\ominus$ $\oplus$ $\ominus$ \textcolor{light-gray}{  $+$  $+$  $+$  }\\
\hfill 9\;\vline& \textcolor{light-gray}{  $+$  $+$  $+$} $\ominus$ $\oplus$  $\oplus$ $\ominus$  \textcolor{light-gray}{ $+$  $+$  $+$}  \\
\hfill 10\;\vline&\textcolor{light-gray}{  $+$  $+$  $+$} $\ominus$ $\oplus$ $\ominus$ $\oplus$ $\ominus$ \textcolor{light-gray}{  $+$  $+$  $+$  }\\
\hfill 11\;\vline&\textcolor{light-gray}{  $+$  $+$  $+$}  $\oplus$  $\oplus$ $\ominus$ $\ominus$  $\oplus$  $\oplus$  \textcolor{light-gray}{  $+$  $+$  $+$  }\\
\hfill 12\;\vline&\textcolor{light-gray}{  $+$  $+$  $+$} $\ominus$ $\oplus$ $\ominus$ $-$ $\ominus$  $\oplus$ $\ominus$  \textcolor{light-gray}{ $+$  $+$  $+$  }\\
\hfill 13\;\vline&\textcolor{light-gray}{  $+$  $+$  $+$} $\ominus$ $\oplus$ $\ominus$ $\ominus$ $\ominus$ $\ominus$ $\oplus$ $\ominus$ \textcolor{light-gray}{  $+$  $+$  $+$  }\\
\hfill 14\;\vline&\textcolor{light-gray}{  $+$  $+$  $+$} $\ominus$ $\oplus$ $\ominus$ $\oplus$ $\ominus$ $\oplus$ $\ominus$ $\oplus$ $\ominus$  \textcolor{light-gray}{ $+$  $+$  $+$  }\\
\hfill 15\;\vline&\textcolor{light-gray}{  $+$  $+$  $+$} $\ominus$ $\oplus$ $\ominus$ $\ominus$ $\ominus$  $\ominus$ $\ominus$ $\ominus$  $\oplus$ $\ominus$ \textcolor{light-gray}{  $+$  $+$  $+$  }\\
\hfill 16\;\vline&\textcolor{light-gray}{  $+$  $+$  $+$} $\ominus$ $\oplus$ $\ominus$ $-$ $\ominus$ $-$ $\ominus$ $-$ $\ominus$ $\oplus$ $\ominus$ \textcolor{light-gray}{  $+$  $+$  $+$  }\\
\hfill 17\;\vline&\textcolor{light-gray}{  $+$  $+$  $+$} $\ominus$ $\oplus$ $\ominus$ $\ominus$ $\ominus$  $\ominus$ $\ominus$ $\ominus$ $\ominus$  $\ominus$  $\oplus$ $\ominus$  \textcolor{light-gray}{ $+$  $+$  $+$  }\\
\hfill 18\;\vline&\textcolor{light-gray}{  $+$  $+$  $+$} $\ominus$ $\oplus$ $\ominus$ $-$ $\ominus$ ${}\mathbin ?{}$ $\ominus$ ${}\mathbin ?{}$ $\ominus$ $-$ $\ominus$ $\oplus$ $\ominus$ \textcolor{light-gray}{  $+$  $+$  $+$  }\\
\hfill 19\;\vline&\textcolor{light-gray}{ $+$  $+$  $+$} $\ominus$ $\oplus$ $\ominus$ $\ominus$ $\ominus$ 
$\ominus$ $\ominus$ ${}\ominus$ $\ominus$ $\ominus$ $\ominus$ $\ominus$ $\oplus$ $\ominus$
\textcolor{light-gray}{  $+$  $+$  $+$  }\\
\hfill 20\;\vline&\textcolor{light-gray}{  $+$  $+$  $+$} $\ominus$ $\oplus$ $\ominus$ ${}\mathbin ?{}$ $\ominus$ ${}\mathbin ?{}$ $\ominus$ ${}\mathbin ?{}$ $\ominus$ ${}\mathbin ?{}$ $\ominus$ ${}\mathbin ?{}$ $\ominus$ $\oplus$ $\ominus$ \textcolor{light-gray}{  $+$  $+$  $+$  }\\
\hfill 21\;\vline&\textcolor{light-gray}{ $+$  $+$  $+$} $\ominus$ $\oplus$ $\ominus$ $\ominus$ $\ominus$ $\ominus$ $\ominus$ $\ominus$ 
$\ominus$ $\ominus$ ${}\ominus$ $\ominus$ $\ominus$ $\ominus$ $\ominus$ $\oplus$ $\ominus$ 
\textcolor{light-gray}{  $+$  $+$  $+$  }\\
\hfill 22\;\vline&\textcolor{light-gray}{ $+$  $+$  $+$} $\ominus$ $\oplus$ $\ominus$ ${}\mathbin ?{}$ $\ominus$ ${}\mathbin ?{}$ $\ominus$ ${}\mathbin ?{}$ $\ominus$ ${}\mathbin ?{}$ $\ominus$ ${}\mathbin ?{}$ $\ominus$ ${}\mathbin ?{}$ $\ominus$ $\oplus$ $\ominus$ \textcolor{light-gray}{  $+$  $+$  $+$  }\\
\end{tabular}
\end{center}
Moreover, the table continues like one would expect from looking at the lines
$m=19$, $20$, $21$, $22$. 
Hillar's descent Theorem \ref{hillar} together with
positive results for $k=4$ (by Landweber and Speer and, independently,
by Burgdorf) establishes Conjecture \ref{bmv} for $k\leq 4$ and
$m-k\leq 4$. Also,
there is still the possibility of proving the BMV
conjecture in the same manner 
by replacing a suitable sequence of $\mathbin ?$,
which only occur for even $m$ and $k$, by $\oplus$.

Very recently, using analytical methods, Fleischhack \cite{fle} and,
independently, Friedland \cite{fri} have shown the following: 
For \emph{fixed} positive semidefinite $A,B$ and 
$k\in\N$ there is an $m'\geq k$,
such that $\tr S_{m,k}(A,B)\geq 0$ for all $m\geq m'$.
If $m'$ could be chosen \emph{independently} of $A, B$, then
Conjecture \ref{bmv} would follow by Hillar's descent theorem.

\subsection{Relation to Connes' embedding conjecture}

In \cite{ksconnes} we studied the following conditions 
for real symmetric polynomials $f$ in noncommuting variables $\x:=(X_1,\ldots,X_r)$:
\begin{enumerate}[\rm (i)]
\item
$\tr(f(A_1,\ldots,A_r))\geq 0$ for all $n\in\N$ and all
$A_i\in\sym\R^{n\times n}$ with $\|A_i\|\leq 1$;\label{connes-fdim}
\item
$\tau(f(a_1,\ldots,a_r))\geq 0$ for all II$_1$-factors $\mathcal F$ and all 
$a_i \in\sym \mathcal F$ with $\|a_i\|\leq 1$;\label{connes-II1}
\item
$\forall \,\ep\in\R_{>0}\; \exists\, g\in\R\ax$: $$f+\ep \csim g\in M:=\{ \sum_i g_i^*g_i+\sum_{i,j}h_{ij}^*(1-X_i^2)h_{ij}
\mid g_i,h_{i,j}\in\R\ax\}.$$\label{connes-qm}
\end{enumerate}

We proved that \eqref{connes-II1} and \eqref{connes-qm} are equivalent and
imply \eqref{connes-fdim}. Moreover, we showed that 
the converse implication \eqref{connes-fdim} $\Rightarrow$
\eqref{connes-II1} is equivalent to an old conjecture of Connes about type II$_1$-factors. 

In Example \ref{non-sos} we have seen that $S_{6,3}(X^2,Y^2)\not\in\Th^2$, hence
the tracial version of Helton's sum of hermitian squares theorem \cite{hel} fails (cf.~also Remark \ref{helton-trace}).
By homogeneity, even $S_{6,3}(X^2,Y^2)+\ep\not\in\Th^2$ for all $\ep\in\R$.
Similarly, there is no $g\in M$ with $S_{6,3}(X^2,Y^2)\csim g$ although $S_{6,3}(X^2,Y^2)$ satisfies
\eqref{connes-fdim}. However, it is unknown whether $S_{6,3}(X^2,Y^2)$ satisfies
\eqref{connes-II1} (or equivalently, \eqref{connes-qm}).
If it does not, then Connes' embedding conjecture fails.

\subsection*{Acknowledgments.}
{\small We would like to thank Christopher Hillar for introducing the second author
to the BMV conjecture at an IMA workshop in Minneapolis. The main part of the
work was done at the Universit\"at Konstanz, the former host institution of the
second author, during a stay of the first author financed by the DFG. Part of the work was also done during the Real Algebraic Geometry workshop in
Oberwolfach in March 2007. A preliminary report of this work appeared in the
Oberwolfach reports \cite{ksmfo}.
We would like to thank Peter Landweber and Eugene Speer for the careful 
reading of a previous version of the manuscript. They provided us with a
detailed list of comments and corrections that improved the exposition as
well as some of the results. Also, Pierre Moussa and Sabine Burgdorf
contributed some valuable remarks. Finally, we would like to thank two
anonymous referees for their suggestions
which greatly contributed 
to the
overall presentation.
}

\appendix

\section{Euler-Lagrange equations}

Hillar's proof of the descent Theorem \ref{hillar} relies on 
\cite[Corollary 3.6]{hil}.
In this section we prove a similar statement, Lemma \ref{lagrange}, 
which can alternatively be used to prove
the descent theorem by a simple inspection of Hillar's proof.

Our proof of Lemma \ref{lagrange} uses only Lagrange multipliers 
and is shorter and
simpler than Hillar's variational proof of \cite[Corollary 3.6]{hil}.
However, the two results are not entirely reconcilable.

For a variational approach to the original form of the BMV
conjecture, we refer the reader to \cite{lec}, see also \cite{mou}.

\begin{lemma}\label{lagrange}
Given $n\in\N$, suppose that $(A,B)$ minimizes $\tr(S_{m,k}(A^2,B^2))$
among all symmetric $A,B\in\R^{n\times n}$ of Hilbert-Schmidt norm $1$. Suppose further
that $A$ and $B$ are positive semidefinite. Then
\begin{eqnarray}\label{lagrange-eq1}
AS_{m-1,k}(A^2,B^2)&=&\frac{m-k}m\tr(S_{m,k}(A^2,B^2))A\qquad\text{and}\\
\label{lagrange-eq2}
BS_{m-1,k-1}(A^2,B^2)&=&\frac km\tr(S_{m,k}(A^2,B^2))B.
\end{eqnarray}
\end{lemma}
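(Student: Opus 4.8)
The plan is to read the two identities as the Lagrange (Euler--Lagrange) conditions for the constrained minimisation, taking ``$A,B$ of Hilbert--Schmidt norm $1$'' to mean the two constraints $\tr(A^2)=1$ and $\tr(B^2)=1$, so that there are two scalar multipliers. Put $f(A,B):=\tr(S_{m,k}(A^2,B^2))$, $S:=S_{m-1,k}(A^2,B^2)$ and $T:=S_{m-1,k-1}(A^2,B^2)$ (with the convention $S_{a,b}:=0$ for $b\notin\{0,\dots,a\}$); note $S,T$ are symmetric. The first step is a gradient formula. Starting from the elementary identity $\frac{d}{ds}\big|_{0}\tr\big((A+sC+tB)^m\big)=m\,\tr\big(C(A+tB)^{m-1}\big)$, valid for all square matrices of a common size, I would expand both sides as polynomials in $t$ via $(A+sC+tB)^m=\sum_j t^jS_{m,j}(A+sC,B)$ and $(A+tB)^{m-1}=\sum_j t^jS_{m-1,j}(A,B)$, and compare coefficients of $t^k$ to obtain
$$\frac{d}{ds}\Big|_{0}\tr\big(S_{m,k}(A+sC,B)\big)=m\,\tr\big(C\,S_{m-1,k}(A,B)\big),$$
and symmetrically $\frac{d}{ds}\big|_{0}\tr\big(S_{m,k}(A,B+sC)\big)=m\,\tr\big(C\,S_{m-1,k-1}(A,B)\big)$. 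Combining this with the chain rule for $A\mapsto A^2$ (derivative $H\mapsto AH+HA$) and cyclicity of the trace gives $\frac{d}{dt}\big|_0 f(A+tH,B)=m\,\tr\big((AH+HA)S\big)=m\,\tr\big(H(SA+AS)\big)$, so with respect to the Hilbert--Schmidt inner product on symmetric matrices $\nabla_Af=m(SA+AS)$ and, likewise, $\nabla_Bf=m(TB+BT)$.

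Next, at the minimiser the Lagrange conditions furnish scalars $\mu_A,\mu_B$ with $m(SA+AS)=\mu_AA$ and $m(TB+BT)=\mu_BB$. Pairing the first equation with $A$ and using $\tr((SA+AS)A)=2\tr(A^2S)$ together with $\tr(A^2)=1$ gives $\mu_A=2m\,\tr(A^2S)$. To evaluate $\tr(A^2S)$ I would invoke homogeneity: $\tr(S_{m,k}(U,V))$ is a homogeneous polynomial of degree $m-k$ in $U$ (each of its words carries exactly $m-k$ factors from $U$), so Euler's identity combined with the gradient formula taken with $C=U$ yields $(m-k)\tr(S_{m,k}(U,V))=m\,\tr\big(U\,S_{m-1,k}(U,V)\big)$; at $U=A^2$, $V=B^2$ this reads $\tr(A^2S)=\tfrac{m-k}{m}\tr(S_{m,k}(A^2,B^2))$. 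Hence $\mu_A=2(m-k)\tr(S_{m,k}(A^2,B^2))$ and $SA+AS=\tfrac{2(m-k)}{m}\tr(S_{m,k}(A^2,B^2))\,A$. The parallel argument for $B$, now using that $\tr(S_{m,k}(U,V))$ is homogeneous of degree $k$ in $V$, gives $\mu_B=2k\tr(S_{m,k}(A^2,B^2))$ and $TB+BT=\tfrac{2k}{m}\tr(S_{m,k}(A^2,B^2))\,B$.

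Finally I would prove the splitting step, which is the only place the positive semidefiniteness hypothesis is used: if $A\succeq 0$ is symmetric, $S=S^*$, and $SA+AS=cA$ for a scalar $c$, then $AS=\tfrac c2A$. Working in an orthonormal eigenbasis of $A$, write $A=\operatorname{diag}(\lambda_1,\dots,\lambda_n)$ with $\lambda_i\ge 0$; the hypothesis becomes $(\lambda_i+\lambda_j)S_{ij}=c\lambda_i\delta_{ij}$ for all $i,j$, so $S_{ij}=0$ whenever $i\ne j$ and $\lambda_i+\lambda_j>0$, and $S_{ii}=c/2$ whenever $\lambda_i>0$; in every case $\lambda_iS_{ij}=\tfrac c2\lambda_i\delta_{ij}$, that is $AS=\tfrac c2A$. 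Applying this with $c=\tfrac{2(m-k)}{m}\tr(S_{m,k}(A^2,B^2))$ turns $SA+AS=cA$ into $AS_{m-1,k}(A^2,B^2)=\tfrac{m-k}{m}\tr(S_{m,k}(A^2,B^2))A$, which is \eqref{lagrange-eq1}; the same step with $B$ and $T$ in place of $A$ and $S$ yields \eqref{lagrange-eq2}. I expect this last step to be the only real obstacle: the Lagrange conditions by themselves only deliver the \emph{symmetrised} products $SA+AS$ and $TB+BT$, and it is precisely the assumption $A,B\succeq 0$ that lets one split these to recover the one-sided products appearing in the statement; everything else is bookkeeping with the gradient formula and Euler's homogeneity identity.
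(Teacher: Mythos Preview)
Your proposal is correct and follows essentially the same Lagrange-multiplier approach as the paper: compute the gradient $\nabla_A f=m(AS+SA)$, obtain the Lagrange condition $AS+SA=cA$, identify the multiplier via $\tr(A^2S)=\tfrac{m-k}{m}\tr(S_{m,k}(A^2,B^2))$, and then use $A\succeq 0$ to pass from the symmetrised product to the one-sided product. The only cosmetic differences are that the paper identifies the multiplier \emph{after} the splitting step rather than before, and that its splitting argument is phrased as ``multiply $AS+SA=cA$ by $A$ on the left and on the right and subtract to get $[A^2,S]=0$, hence $[A,S]=0$ since $A\succeq 0$'', whereas you diagonalise $A$ and argue entrywise; both routes are equally short.
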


\begin{proof}
We actually prove more. We fix an arbitrary $B\in\sym\R^{n\times n}$ and show
that \eqref{lagrange-eq1} holds when a positive semidefinite matrix $A$ minimizes
$\tr(S_{m,k}(A^2,B^2))$ among all $A\in\sym\R^{n\times n}$ with $\|A\|_{\rm HS}=1$.
Then a corresponding statement will hold for \eqref{lagrange-eq2} by symmetry.
Recall that the Hilbert-Schmidt norm on $\sym\R^{n\times n}$ is induced by the scalar product
given by $\langle A,B\rangle_{\rm HS}:=\tr(AB)=\sum_{i,j}A_{i,j}B_{i,j}$. 
We use the method of Lagrange multipliers and therefore compute the first
derivatives of the functions $f,g:\sym\R^{n\times n}\to\R$ given by
$$f:A\mapsto\tr(S_{m,k}(A^2,B^2))\qquad\text{and}\qquad
  g:A\mapsto\tr(A^2)=\|A\|_{\rm HS}^2.$$
The derivatives $Df(A)[H]$ and $Dg(A)[H]$ at $A\in\sym\R^{n\times n}$ along the
direction $H\in\sym\R^{n\times n}$ are the coefficients of the linear terms of
$f(A+\la H)$ and $g(A+\la H)$ considered as polynomials in $\la$, respectively.
Since
$$g(A+\la H)=\tr((A+\la H)(A+\la H))=\tr(A^2)+\la(\tr(AH)+\tr(HA))+\la^2\tr(H^2),$$
we get $Dg(A)[H]=\tr(AH)+\tr(HA)=\tr(2AH)=\langle 2A,H\rangle$, i.e., the gradient
of $g$ in $A$ is $\nabla g(A)=2A$.

The calculation of $Df(A)[H]$ is more complicated,
but follows the same scheme, namely that one occurrence of $A^2$ at a time
can be replaced by $AH$ or $HA$. The idea is the same as in the proof of \cite[Lemma 2.1]{hil}.
We have
\begin{align*}
0&=\tr\left(\sum_{i=1}^m(A^2+tB^2)^{i-1}((AH+HA)-(AH+HA))(A^2+tB^2)^{m-i}\right)\\
&=
\tr\left(m(AH+HA)(A^2+tB^2)^{m-1}\right)- \\
&\quad\, \tr\left(\sum_{i=1}^m(A^2+tB^2)^{i-1}(AH+HA)(A^2+tB^2)^{m-i}\right)
\end{align*}
and the coefficient of $t^k$ in the last expression is
$$\tr(m(AH+HA)S_{m-1,k}(A^2,B^2))-Df(A)[H].$$
This implies
$$Df(A)[H]=\langle m(AS_{m-1,k}(A^2,B^2)+S_{m-1,k}(A^2,B^2)A),H\rangle$$
and therefore $\nabla f(A)=m(AS_{m-1,k}(A^2,B^2)+S_{m-1,k}(A^2,B^2)A)$.

If $A$ is now a minimizer as stated, then we
obtain a Lagrange multiplier $\mu\in\R$ such that
$\nabla f(A)=\mu\nabla g(A)$ (since $\nabla g(A)=2A\neq 0$), i.e.,
\begin{equation}\label{precomeq}
AS_{m-1,k}(A^2,B^2)+S_{m-1,k}(A^2,B^2)A=\mu A.
\end{equation}
We now subtract the two equations that can be obtained from \eqref{precomeq} by
multiplication
with $A$ from the left and right, respectively, and see that $A^2$ commutes with
$S_{m-1,k}(A^2,B^2)$. If $A$ is in addition positive semidefinite, then also
$A$ commutes with $S_{m-1,k}(A^2,B^2)$.
Therefore \eqref{precomeq} becomes $AS_{m-1,k}(A^2,B^2)=\frac\mu 2A$.
Moreover, $$\frac\mu 2=\tr(\frac\mu 2A^2)=\tr(A^2S_{m-1,k}(A^2,B^2))=
\frac{m-k}m\tr(S_{m,k}(A^2,B^2))$$ by \cite[Lemma 2.1]{hil}.
\end{proof}

\section{Self-contained proof of Conjecture \ref{bmv} for $m=13$}

Instead of Hillar's descent Theorem \ref{hillar} one can use special
features of the found nonnegativity certificates for $S_{14,4}(X^2,Y^2)$
and $S_{14,6}(X^2,Y^2)$ to deduce Conjecture \ref{bmv} for
$m$ \emph{equal to} $13$.
We include this since the ideas might be helpful in future algebraic
approaches to the BMV conjecture.

\medskip
Retain the notation from Section \ref{export}.
From the Cholesky decomposition of $G_{14,4}$ we deduce that
\begin{equation*}
S_{14,4}(X^2,Y^2)\csim\sum_{i=1}^4 g_i^*g_i
\end{equation*}
for 
\begin{align*}
g_1 &= \sqrt 7 (Y^2X^{10}Y^2 + X^4Y^4X^6Y^2+X^2Y^2X^8Y^2+ X^{10}Y^4+X^8Y^2X^2Y^2+ \\
&\qquad\;\;\;\, X^6Y^2X^4Y^2), \\
g_2 &= \sqrt 7 ( X^4Y^2X^2Y^2X^4 + X^6Y^4X^4 + X^4Y^2X^4Y^2X^2 +  X^8Y^4X^2 + \\
&\qquad\;\;\;\,
X^6Y^2X^2Y^2X^2), \\
g_3 &= \sqrt 7 X^6Y^4X^4, \\
g_4 &= \sqrt 7 (X^2Y^2X^6Y^2X^2 + X^4Y^2X^4Y^2X^2 + X^8Y^4X^2+X^6Y^2X^2Y^2X^2 + \\
&\qquad\;\;\;\, X^4Y^4X^6Y^2+X^2Y^2X^8Y^2 + X^{10}Y^4+X^8Y^2X^2Y^2+X^6Y^2X^4Y^2).
\end{align*}

We now turn to $S_{14,6}(X^2,Y^2)$. Let $[1]_{35\times 35}$ be the 
$35\times 35$ matrix with all entries equal to $1$. Then
$B_{14,6} - \lambda [1]_{35\times 35}$ is positive semidefinite whenever 
$$
\lambda\leq\frac{5888894501020664034438572773247271387}{6345100314096416989598091089889990510969779} \approx
9.281\cdot 10^{-7}.
$$
As $\bar w_{14,6}^* [1]_{35\times 35} \bar w_{14,6} = 
S_{7,3}(X^2,Y^2)^2$, this implies that for some $h_i\in\R\ax$,
\begin{equation}\label{keycsimrelation}
S_{14,6}(X^2,Y^2)\csim 10^{-7} S_{7,3}(X^2,Y^2)^2+ \sum_i h_i^*h_i.
\end{equation}

We are now ready to prove Conjecture \ref{bmv} for 
$m=13$. It is easy to see that
$S_{13,k}(X^2,Y^2)\in\Th^2$ for $k\in\{0,1,2,11,12,13\}$.
Let us consider $S_{13,3}(A^2,B^2)$ for positive semidefinite $A,B\in\R^{n\times n}$.
Suppose there are such $A,B$ with 
\begin{equation}\label{tr133}
\tr(S_{13,3}(A^2,B^2))<0. 
\end{equation}
By Lemma \ref{lagrange},
we may without loss of generality assume that $A$ and $B$ satisfy
\eqref{lagrange-eq1} and \eqref{lagrange-eq2}
(with $m=13$ and $k=3$). Then $AS_{12,3}(A^2,B^2)$ and
$BS_{12,2}(A^2,B^2)$ are negative semidefinite, $A$ commutes with
$S_{12,3}(A^2,B^2)$ and $B$ commutes with $S_{12,2}(A^2,B^2)$. Hence
$$
S_{13,3}(A^2,B^2)=A^2S_{12,3}(A^2,B^2)+B^2S_{12,2}(A^2,B^2)
$$
is negative semidefinite and so is $BS_{13,3}(A^2,B^2)B$. By the above,
$S_{14,4}(X^2,Y^2)\in \Th^2$, so
$$
0\leq\tr(S_{14,4}(A^2,B^2))=\frac{14}{10} \tr(B^2S_{13,3}(A^2,B^2))=\frac{14}{10} \tr(BS_{13,3}(A^2,B^2)B)\leq 0.
$$
(For the first equality see e.g.~\cite[Lemma 2.1]{hil}.)
As $S_{14,4}(X^2,Y^2)\csim\sum_{i=1}^4 g_i^*g_i$ with $g_3=\sqrt 7 X^6Y^4X^4$ and
$\tr(S_{14,4}(A^2,B^2))=0$, $A^6B^4A^4=0$ by Lemma \ref{trrad}. In particular, $\tr((B^2A^{5})^*(B^2A^{5}))=0$,
hence $B^2A^{5}=0$. Repeating this we obtain $BA^{5/2}=A^{5/2}B=0$. But then
$S_{13,3}(A^2,B^2)=0$, contradicting \eqref{tr133}. This proves the BMV conjecture
for $(m,k)\in\{(13,3),(13,10)\}$. Similarly, the cases $(m,k)=(13,4)$ and $(m,k)=(13,9)$ can be handled.

Let us now consider $S_{13,5}(A^2,B^2)$ for positive semidefinite $A,B\in\R^{n\times n}$.
Suppose there are such $A,B$ with 
\begin{equation}\label{tr135}
\tr(S_{13,5}(A^2,B^2))<0. 
\end{equation}
As before, we can deduce that $\tr(S_{14,6}(A^2,B^2))=0$. 
From \eqref{keycsimrelation} it follows that $S_{7,3}(A^2,B^2)=0$. 
By \eqref{sos73}, 
this implies $B^2A^4B=0$, thus $B^{3/2}A^2=A^2 B^{3/2}=0$. Therefore $S_{13,5}(A^2,B^2)=0$,
contradicting \eqref{tr135}.
This settles Conjecture \ref{bmv} for $(m,k)\in\{(13,5),(13,8)\}$. To conclude the proof we note that
the two remaining cases $(m,k)=(13,6)$ and $(m,k)=(13,7)$ can be handled similarly.

\end{document}